\newtheorem{theorem}{Theorem}[section]
\newtheorem{lemma}[theorem]{Lemma}
\newtheorem{proposition}[theorem]{Proposition}
\newtheorem{conjecture}[theorem]{Conjecture}
\theoremstyle{definition}
\newtheorem*{ack}{Acknowledgements}
\newtheorem*{con}{Conventions}
\newtheorem{remark}[theorem]{Remark}
\newtheorem{definition}[theorem]{Definition}
\numberwithin{equation}{section} \numberwithin{figure}{section}
\DeclareMathOperator{\Spec}{Spec}
\DeclareMathOperator{\an}{an}
\DeclareMathOperator{\Hom}{Hom}
\newcommand{\thickslash}{\mathbin{\!\!\pmb{\fatslash}}}
\newcommand\ZZ{\mathbb{Z}}
\newcommand\QQ{\mathbb{Q}}
\newcommand\CC{\mathbb{C}}
\definecolor{orange}{rgb}{1,0.5,0}
\title[Finiteness of pointed families of   polarized varieties]{The Shafarevich conjecture revisited: Finiteness of pointed families of   polarized varieties}
\author{Ariyan Javanpeykar}
\address{Ariyan Javanpeykar \\
Institut f\"{u}r Mathematik\\
Johannes Gutenberg-Universit\"{a}t Mainz\\
Staudingerweg 9, 55099 Mainz\\
Germany.}
\email{peykar@uni-mainz.de}
 \author{Ruiran Sun}
 \address{Ruiran Sun \\
Institut f\"{u}r Mathematik\\
Johannes Gutenberg-Universit\"{a}t Mainz\\
Staudingerweg 9, 55099 Mainz\\
Germany.}
\email{ruirasun@uni-mainz.de}
\author{Kang Zuo}
 \address{Kang Zuo \\
Institut f\"{u}r Mathematik\\
Johannes Gutenberg-Universit\"{a}t Mainz\\
Staudingerweg 9, 55099 Mainz\\
Germany.}
\email{zuok@uni-mainz.de}
\subjclass[2010]
{32Q45, 
14G99 
(11G35,  
14G05,  
14C30) 
}
\keywords{Hyperbolicity, Lang-Vojta conjecture, moduli spaces, Higgs bundles, deformation theory, families of polarized varieties, Arakelov inequality, integral points,  number fields,  finitely generated fields}
\begin{document}

\begin{abstract}  Motivated by Lang-Vojta's conjectures on hyperbolic varieties, we prove a new version of the  Shafarevich conjecture in which we establish the finiteness of    pointed families of     polarized varieties. We then give an arithmetic application  to the finiteness of integral points on   moduli spaces of   polarized varieties.  
\end{abstract}

\maketitle

\thispagestyle{empty}
 
 \section{Introduction}

 Let $k$ be an algebraically closed field of characteristic zero.
   In \cite{ArakelovShaf, Parshin3} Arakelov-Parshin proved  Shafarevich's 1962 ICM conjecture for the moduli space of curves \cite{Shaf1962}. Namely, for every smooth connected curve $B$ over $k$ and every integer $g\geq 2$, the set of $B$-isomorphism classes of non-isotrivial smooth proper curves $X\to B$  of genus $g$ is finite, i.e., the set of isomorphism classes of non-isotrivial morphisms $B\to \mathcal{C}_g\otimes_{\QQ} k$ is finite, where $\mathcal{C}_g$ is the stack over $\mathbb{Q}$ of smooth proper   connected curves of genus $g$, and $\mathcal{C}_g\otimes_{\QQ} k $ denotes the stack $\mathcal{C}_g\times_{\Spec \QQ} \Spec k$.
 
In this paper we      prove a generalization of Arakelov-Parshin's finiteness result in the case of higher-dimensional families of    polarized varieties; see Theorem \ref{thm:main} for a precise statement.   

Our motivation is twofold. First, our finiteness result fits in well with predictions made by the Lang-Vojta conjectures on finiteness properties of hyperbolic varieties. Second, we use our finiteness result to verify the Persistence Conjecture (see Theorem \ref{thm2}) for the moduli space of canonically polarized varieties. This latter result is a crucial ingredient in the proof of   a new arithmetic  finiteness result for smooth hypersurfaces in abelian schemes over arithmetic rings proven  in \cite{JM} (which generalizes the work of Lawrence-Sawin \cite{LawrenceSawin} from number fields to finitely generated fields).

\subsection{Main result}   
To state our result, let $\mathrm{Pol}$ be the stack  over $\QQ$ of   polarized smooth proper  connected varieties with semi-ample canonical bundle.  (This is a substack of the larger stack considered in \cite[Tag~0D1L]{stacks-project}.) The objects of   $\mathrm{Pol} $    are tuples    $(S,f\colon X\to S, \mathcal{L})$ with $S$ a scheme over $\mathbb{Q}$,  $f\colon X\to S$  a smooth proper morphism of schemes whose geometric fibres have semi-ample canonical bundle, and $\mathcal{L}$ an $f$-relatively ample line bundle.   A morphism from  $(S, f\colon X\to S, \mathcal{L})$ to $(S',f'\colon X'\to S', \mathcal{L}')$  in $\mathcal{M}(S)$ is a triple $(\phi_1,\phi_2, \psi)$ where $\phi_1\colon X\to X'$ is a morphism, $\phi_2\colon S\to S'$ is a morphism and $\psi\colon \phi_1^\ast \mathcal{L}'\to \mathcal{L}$ is an isomorphism such that the diagram
\[
\xymatrix{ X \ar[rr]^{\phi_1} \ar[d]_{f} & & X' \ar[d]_{f'} \\ S\ar[rr]_{\phi_2} & & S'.}
\]
is Cartesian.  By \cite[Tag~0D4X]{stacks-project}, the stack $\mathcal{M}$ is algebraic. Its diagonal is affine and of finite type \cite[Lemma~3.1]{JLFano}.   

We let $\mathcal{M} := \mathrm{Pol}\thickslash G$ be the rigidification of $\mathrm{Pol}$ by the flat group substack $G= \mathbb{G}_{m,\mathrm{Pol}}$ of the inertia stack $I_{\mathcal{M}}$ (corresponding to automorphisms given by only scaling the line bundle); see \cite[Theorem~A.1]{AOV} for details on rigidification.  
 For $h$ in $\QQ[t]$, let $\mathcal{M}_h$ be the substack parametrizing   polarized varieties  with Hilbert polynomial $h$. Then, by the theory of Hilbert schemes, Matsusaka's Big Theorem \cite{Mat72}, and the theorem of Matsusaka-Mumford \cite{MatMum}, the stack $\mathcal{M}_h$ is a    finite type separated Deligne-Mumford   algebraic stack     over $\mathbb{Q}$ and $\mathcal{M}=\sqcup_{h\in \mathbb{Q}[t]} \mathcal{M}_h$.     

  Note that, for $g\geq 2$, the stack $\mathcal{C}_g$ of smooth proper connected curves of genus $g$ is a connected component of  $\mathcal{M}$.    Also, the stack $\mathcal{M}$ contains the stack of polarized Calabi-Yau varieties, polarized abelian varieties, and also the stack of canonically polarized varieties (i.e., those with ample canonical bundle).

Given a   variety $U$ over $k$,  the data of a morphism $U\to \mathcal{M}\otimes_{\QQ} k$ is equivalent to the data of   a smooth proper morphism  $\pi:V\to U$ whose geometric fibres are connected with semi-ample canonical bundle, and the data $L$ of a relatively ample line bundle on $U$.  (We denote this as $(V,L)\to U$.)    Thus,  by definition,
  if $U$ is a variety over $k$ and  $U\to \mathcal{M}\otimes_{\QQ} k$ is a morphism and $(V, L)\to U$ is the associated family, then $U\to \mathcal{M}\otimes_{\QQ} k$ is quasi-finite (as a morphism of algebraic stacks) if and only if, for every   polarized variety $(V_0,L_0)$, the set of $u$ in $U(k)$ such that 
\[
(V_u,L_u)\cong (V_0,L_0)
\] is finite.

We prove the following version of the Shafarevich conjecture which establishes the finiteness of families over a given variety with a    \emph{large enough} fixed set of fibres.  To state our result,  for $C$  a smooth quasi-projective connected curve, we let $\overline{C}$ be its smooth projective model and $g(\overline{C})$ its genus.  

\begin{theorem}[Weak-Pointed Shafarevich Conjecture]\label{thm:main} 
Let $k$ be an algebraically closed field of characteristic zero, let $h\in \QQ[t]$, let $U$ be a quasi-projective variety over $k$, and let $U\to \mathcal{M}_{h}\otimes_{\QQ} k$ be a quasi-finite morphism.   Then $U$ has the following finiteness property:

If  $C$  is a smooth quasi-projective connected curve,  $N\geq 0$ is an integer,   $c_1,\ldots, c_N\in C(k)$  are pairwise distinct points, $u_1,\ldots, u_N\in U(k)$ are points, and 
 $$N\geq \frac{\deg h-1}{2}\left(2g(\overline{C}) -2 +\# (\overline{C}\setminus C)\right),$$ 
 then   the set of non-constant morphisms $f:C\to U$ with $f(c_1) = u_1, \ldots, f(c_N) = u_N$ is finite.
\end{theorem}
 
  The study of families of varieties with enough ``fixed'' fibres is not artificial and appears naturally in  the study of finiteness properties of hyperbolic varieties; see Section \ref{section:lang} below for a discussion. 
  
  The conclusion of our theorem (Theorem \ref{thm:main}) probably holds   for all \textbf{hyperbolic} varieties; see Conjecture \ref{conj:lv}. However, proving this general expectation is currently out-of-reach (much like its arithmetic analogue predicting finiteness of integral points on hyperbolic varieties).

  In   Theorem \ref{thm:main} one may take ``$N=0$'' when $\deg h = 1$. In particular, for $g\geq 2$, by  applying Theorem \ref{thm:main} to   the fine moduli space  $U:=\mathcal{C}_g^{[3]}$ of  smooth proper  curves of genus $g$ with full level $3$ structure we recover, up to a standard stacky argument, the finiteness theorem of   Arakelov-Parshin for $\mathcal{C}_g$; see Remark \ref{remark:ap} for details.

The Shafarevich conjecture for families of curves was proved by Arakelov-Parshin in two steps. First, they showed that the moduli space of   smooth proper curves of genus $g$ over a fixed base curve is bounded   (i.e., it is of finite type) using Arakelov's inequality. Then, they showed that the moduli space of non-isotrivial  curves over $B$ is zero-dimensional (i.e., each non-isotrivial family of higher genus curves is   ``rigid'').   However, the analogue of the Shafarevich conjecture for higher-dimensional families  fails, as non-isotrivial families of such varieties can  be non-rigid. Let us be more precise.
  
  One can not take ``$N=0$'' in Theorem \ref{thm:main} when $\deg h >1$, as there are many curves $C$ which admit a non-constant \emph{non-rigid} morphism $C\to \mathcal{M}\otimes_{\QQ} \mathbb{C}$. For example,  let $g\geq 2$ be an integer and let $C\to \mathcal{C}_g$ be a non-isotrivial morphism (associated to a non-isotrivial smooth proper curve of genus $g$ over $C$). Define $U=C\times C$. Then $U$ admits a quasi-finite morphism to $\mathcal{C}_g\times \mathcal{C}_g$, and therefore also to $\mathcal{M}$. However, the set of non-constant morphisms $C\to U=C\times C$ is clearly not finite.   
  
Motivated by Lang-Vojta type conjectures, we expect that families of higher-dimensional   polarized varieties with semi-ample canonical bundle are rigid, as long as one fixes  ``enough'' fibres of the family.  We formulate our most optimistic expectation below (see Conjecture \ref{conj}).

To prove  our main finiteness result (Theorem \ref{thm:main})  we prove a new rigidity theorem for families of polarized varieties; see   Theorem \ref{thm:pointed_rigidity}. In our proof we use  Viehweg-Zuo sheaves and   Higgs bundles. Although these tools are   standard by now, their use in the study of pointed maps is novel.
%
%
%
   
    Note that in Theorem \ref{thm:main} we do not assume $U$ to be smooth, nor even normal. Indeed, imposing smoothness or normality of $U$ would be unnatural, as the singularities of the moduli stack of canonically polarized varieties  satisfy Murphy's Law; see \cite{Vakil}. The fact that $U$ is allowed to be singular complicates the proof at certain points; we refer the reader to Remarks \ref{remark:xi} and \ref{remark:singular} for a brief discussion of these points.
  
Finally, we note that, in the presence of an infinitesimal Torelli theorem for the varieties parametrized by $\mathcal{M}_h$ the conclusion of Theorem \ref{thm:main} can be deduced from finiteness properties of varieties with a quasi-finite period map; see    \cite[Theorem~1.5]{JLitt} for details. However, it is well-known that there are canonically polarized surfaces for which   infinitesimal Torelli   fails.

We have also proven a related, but different finiteness result for pointed maps (using entirely different methods) in joint work with Steven Lu\cite{JLSZ}.  For the reader's convenience, we summarize the differences here.

  \begin{enumerate}
  \item In \cite{JLSZ}, we prove   finiteness of maps $(Y,y)\to (U,u)$ for every pointed variety $(Y,y)$ and every point $u$  in $U$ \textbf{not} in some proper closed subset $\Delta$ (which is independent of $Y$ and $y$). In this paper we prove finiteness of maps $(C,c_1,\ldots,c_N) \to (U,u_1,\ldots, u_N)$ for all choices of pairwise distinct points $c_1,\ldots, c_N$ and all choices of points $u_1,\ldots, u_N$ in $U$, assuming $N$ is "large enough".
  \item In \cite{JLSZ}, our rigidity result for pointed maps is a consequence of a more general rigidity property for varieties with a certain Finsler pseudometric; the existence of the   Finsler pseudometric on the moduli space of polarized varieties necessary in \cite{JLSZ}  was proven by    Ya Deng (in the generality applied there). On the other hand, in  this paper we rely on an Arakelov-type inequality for the number of base points. Both proofs have in common that they rely on constructions of Viehweg-Zuo. However, in  \cite{JLSZ} we do not require any \emph{new} Viehweg-Zuo type constructions. In the current paper, we have to adapt existing Viehweg-Zuo-type results and constructions to the setting of $N$-pointed maps (which is why a large part of this paper  is dedicated to the construction of suitable Viehweg-Zuo sheaves).  
  \item The difference in the two papers can be clearly seen in the applications of the main results. For example,  our main result   is used to prove the Persistence Conjecture for varieties with a quasi-finite morphism to the stack of canonically polarized varieties  (see Theorem \ref{conjper}; this  arithmetic consequence allowed us   to extend Lawrence-Sawin's finiteness result for smooth hypersurfaces in abelian varieties over number fields to finitely generated fields (see \cite{JM}). The main result of  the more recent paper \cite{JLSZ}   is not "strong" enough to resolve the Persistence Conjecture (because the finiteness result for $1$-pointed maps in that paper inevitably comes with an exceptional locus).
  \item The main result of our recent paper \cite{JLSZ}   has different applications. It can be used to bound the dimension of moduli spaces of maps into $U$ and to prove new inheritance of hyperbolicity properties. On the other hand, the main result of  this paper can not be used to prove such a  dimension bound nor inheritance statement.  
  \end{enumerate}

\subsection{Motivation: Lang-Vojta's conjecture on hyperbolicity}\label{section:lang} 
  We were first led to investigate the pointed version of the Shafarevich conjecture by the Lang-Vojta conjectures \cite{Lang2, Vojta87} which predict that the complex-analytic hyperbolicity of a quasi-projective variety over $\mathbb{C}$ should force a plethora of arithmetic and geometric finiteness properties. The following version of the Lang-Vojta conjectures  formed the guiding thread  in our investigations on the Weak-Pointed Shafarevich Conjecture:
  
  \begin{conjecture}[Lang-Vojta]\label{conj:lv}
  Let $X$ be a quasi-projective variety over $\CC$. Then the following statements are equivalent.
  \begin{enumerate}
  \item The variety $X$ is Brody hyperbolic.
  \item Every subvariety of $X$ is of log-general type.
  \item For every smooth quasi-projective connected curve $C$ over $\CC$, every  $c\in C(\CC)$,   every     $x\in X(\CC)$, the set of morphisms $f:C\to X$ with $f(c) =x$ is finite.
  \end{enumerate}
  \end{conjecture}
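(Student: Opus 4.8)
This statement is the Lang--Vojta conjecture, so a complete proof is out of reach; what I can outline is the expected architecture of a proof, together with the point at which it breaks down and the point at which Theorem \ref{thm:main} makes progress. The plan is to establish the cycle of implications $(1)\Rightarrow(2)\Rightarrow(3)\Rightarrow(1)$. (The conjecture is already known in some cases, e.g.\ for $X$ a subvariety of an abelian variety, by Bloch--Ochiai--Kawamata and Faltings--Vojta; these cases suggest the pattern but not a general mechanism.)

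For $(1)\Rightarrow(2)$ I would first note that every locally closed subvariety $Y\subseteq X$ inherits Brody hyperbolicity, since an entire curve $\CC\to Y$ is an entire curve in $X$; it then remains to show that a Brody hyperbolic quasi-projective variety is of log-general type. Dually, for $(2)\Rightarrow(1)$ one passes to the Zariski closure $Z\subseteq X$ of a hypothetical non-constant entire curve $g\colon\CC\to X$ and must prove that $Z$ is not of log-general type. Both reductions land inside the Green--Griffiths--Lang circle: one wants that a variety which is \emph{not} of log-general type (conjecturally, a Campana special variety) carries a non-constant entire curve, constructed from its Iitaka, Albanese and maximal rationally connected fibrations. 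This is the principal obstacle — it is open already in dimension two — so $(1)\Leftrightarrow(2)$ is where a genuinely new idea would be required.

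The implication $(2)\Rightarrow(3)$ is the geometric half of Lang--Vojta, and here I would follow the strategy of Arakelov--Parshin's proof of the Shafarevich conjecture: fix $(C,c)$ and $x\in X(\CC)$, and study the moduli scheme $M$ of morphisms $f\colon C\to X$ with $f(c)=x$. Boundedness of $\Hom$-schemes between quasi-projective varieties (after compactifying $C$ and using the pointing to rigidify) shows that $M$ is of finite type, so finiteness of $M(\CC)$ reduces to the rigidity assertion that $M$ is zero-dimensional. A positive-dimensional family of such pointed maps would sweep out a subvariety $Y\subseteq X$ dominated by a non-isotrivial family of pointed curves through a fixed point, which one expects not to be of log-general type, contradicting $(2)$. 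Carrying this out in general is open, but it is precisely the step we can execute when $X=U$ is quasi-finite over $\mathcal{M}_h$ (Theorem \ref{thm:pointed_rigidity}), using Viehweg--Zuo sheaves and the negativity of the kernel of the Higgs field of the family. Finally, for $(3)\Rightarrow(1)$, a non-constant entire curve in $X$ must be converted into infinitely many pointed algebraic morphisms $C\to X$ for suitable $(C,c,x)$; the obstruction is the transcendental-to-algebraic passage, which again requires knowing that the Zariski closure of the entire curve is special, so as to support such an algebraic family. In short, every implication in the cycle ultimately rests on Green--Griffiths--Lang-type input, which is the true content of the conjecture; the contribution of Theorem \ref{thm:main} is to supply the decisive rigidity input for $(2)\Rightarrow(3)$ in the case of moduli of polarized varieties.
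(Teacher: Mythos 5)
This statement is labeled as a conjecture in the paper and the paper offers no proof of it --- it serves purely as motivation --- so there is nothing to compare your attempt against. You correctly recognize that the equivalences are open (they are essentially the Green--Griffiths--Lang circle of problems plus the transcendental-to-algebraic passage), and your outline is a fair account of where each implication gets stuck; your placement of Theorem~\ref{thm:main} and Theorem~\ref{thm:pointed_rigidity} as supplying the rigidity input for a weak form of $(2)\Rightarrow(3)$ in the special case $X=U$ quasi-finite over $\mathcal{M}_h$ matches the paper's own narrative, where $(1)$ and $(2)$ are known for such $U$ (Viehweg--Zuo, Campana--P\u{a}un) and the full statement $(3)$ remains the open Pointed Shafarevich Conjecture~\ref{conj}.
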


  If $X$ is projective, then the equivalence of $(1)$ and $(2)$ is a consequence of     the Green--Griffiths--Lang conjecture. The fact that $(1)\implies (3)$ for $X$ projective is well-known (and due to Urata  as explained below). The conjecture that $(3)\implies (1)$ in the projective case \emph{follows} from Lang's conjectures as explained   by Demailly implicitly in his work \cite[\S2]{Demailly} (the smoothness assumptions in Demailly's paper are irrelevant; see \cite{JKa}).  On the other hand, if $X$ is quasi-projective (and not necessarily projective), then Conjecture \ref{conj:lv} is not stated explicitly anywhere in the literature.  We  attribute this conjecture to Lang-Vojta, as it is a natural extension of their conjectures to the quasi-projective setting.
    
 The history of     results on the hyperbolicity of varieties with a quasi-finite morphism to $\mathcal{M}\otimes_{\QQ} \CC$ goes back to  Kebekus-Kov\'acs and Migliorini  \cite{KebekusKovacs3, Migliorini}; see \cite{Kovacs, KovacsSubs}  for detailed discussions. 
  In fact, a variety $U$ over $\CC$ which admits a quasi-finite morphism to $\mathcal{M}\otimes_{\QQ } \CC$ is Brody hyperbolic    \cite{Deng,  VZ}.   Recently, in \cite[Corollary~C]{DLSZ} the authors also   showed that $U$ is  Borel hyperbolic in the sense of  \cite{JKuch} (i.e., every holomorphic map $S^{\an}\to U^{\an}$ with $S$ a reduced finite type scheme over $\mathbb{C}$ is algebraic).

    On the algebraic side, assuming the fibres of $V\to U$ have ample canonical bundle,  it is known  that every closed subvariety of $U$ is of log-general type by Campana-P\u{a}un's theorem \cite{CP} (see also \cite{Sch17}). Finally, assuming $U$ is smooth and the moduli map is $U\to \mathcal{M}_h\otimes_{\QQ} \CC$ is unramified, To-Yeung and Schumacher showed that  $U$ is    Kobayashi hyperbolic   \cite{Schumacher, ToYeung}.

  The restriction in   the aforementioned results (as well as our results) to families of varieties with semi-ample canonical bundle is necessary, as   moduli stacks of Fano varieties are not necessarily hyperbolic; see \cite[Section~5]{JLFano}.

\subsection{Arithmetic application} Our actual motivation for investigating finiteness of pointed families of   polarized varieties is of an arithmetic nature.  In fact, by using Theorem \ref{thm:main} we are able to verify certain expected finiteness properties for integral points on $\mathcal{M}$; see Theorem \ref{thm2} below for a precise statement.

Let $k$ be an algebraically closed field of characteristic zero.   
A  variety 
  $X$  over $k$ is said to be \emph{arithmetically hyperbolic over $k$} 
if  there is a  $\ZZ$-finitely generated subring $A\subset k$ and a finite type separated $A$-scheme $\mathcal{X}$ with $\mathcal{X}_k \cong X$ over $k$ such that, for all $\ZZ$-finitely generated subrings $ A'\subset k$ containing $A$,  the set $\mathcal{X}(A')$  of $A'$-points  on $\mathcal{X}$ is finite.  Thus, roughly speaking, a variety is arithmetically hyperbolic if it has only finitely many integral points.

In  \cite[p.~202]{Lang2} Lang asks whether finiteness statements involving  integral points over rings of integers should continue to hold over arbitrary $\mathbb{Z}$-finitely generated   subrings of $\mathbb{C}$.   This naturally leads us to the following conjecture.

\begin{conjecture}[Persistence Conjecture]\label{conjper}
Let $k\subset L$ be an extension of algebraically closed fields of characteristic zero. 
If $X$ is an arithmetically hyperbolic variety over $k$, then $X_L$ is arithmetically hyperbolic over $L$.
\end{conjecture}

  This conjecture has been investigated in \cite{vBJK, JAut, JLevin, JLitt} in the case of varieties with a quasi-finite period map, finite covers  of semi-abelian varieties, algebraically hyperbolic varieties,  Brody hyperbolic varieties, hyperbolically embeddable varieties, and surfaces  with non-zero irregularity, respectively. 
  
We use   Theorem \ref{thm:main}  to show that the Persistence Conjecture holds for base spaces of families of     polarized varieties with semi-ample canonical bundle.
 
 \begin{theorem}[Persistence Conjecture holds for $U$]\label{thm2} Let $k$ be an algebraically closed field of characteristic zero, and let $U$ be a variety over $k$ which admits a quasi-finite morphism $U\to \mathcal{M}\otimes {k}$. Let $k\subset L$ be an extension of algebraically closed fields of characteristic zero. If $U$ is arithmetically hyperbolic over $k$, then $U_L$ is arithmetically hyperbolic over $L$.
 \end{theorem}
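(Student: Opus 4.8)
The plan is to reduce Theorem~\ref{thm2} to the Weak-Pointed Shafarevich Conjecture (Theorem~\ref{thm:main}), combining the latter's geometric finiteness with the assumed arithmetic hyperbolicity of $U$ over $k$ through a spreading-out argument. By the standard reductions for the Persistence Conjecture (see \cite{JBook}, and also \cite{JLitt}) it suffices to prove the statement when $L=\overline{k(C)}$ for a smooth affine connected curve $C$ over $k$: every $\ZZ$-finitely generated subring of $L$ lies in the algebraic closure inside $L$ of a finitely generated subextension of $L/k$, so a limit argument together with induction on the transcendence degree of $L$ over $k$ reduces to this case --- at each step of the induction we use that $U_K$ still admits a quasi-finite morphism to $\mathcal{M}_h\otimes K$ for the relevant intermediate algebraically closed field $k\subseteq K\subseteq L$, so that Theorem~\ref{thm:main} remains available.

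Assume now $L=\overline{k(C)}$. Fix a $\ZZ$-finitely generated subring $A_0\subset k$ and a finite type separated model $\mathcal{U}\to\Spec A_0$ of $U$ witnessing arithmetic hyperbolicity of $U$ over $k$, chosen large enough that the polarized family over $U$ and the quasi-finite morphism to $\mathcal{M}_h$ spread out over $A_0$. Let $A'\subset L$ be a $\ZZ$-finitely generated subring with $A_0\subseteq A'$; since $\mathcal{U}(A')\subseteq\mathcal{U}(A'')$ for $A'\subseteq A''$ we may enlarge $A'$ at will, and we must show that $\mathcal{U}(A')$ is finite. Put $K'=\mathrm{Frac}(A')$. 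If $K'\subseteq k$, then $A'\subset k$ and $\mathcal{U}(A')$ is finite by arithmetic hyperbolicity of $U$ over $k$. Otherwise $K'$ contains an element transcendental over $k$, hence (as $K'\subseteq\overline{k(C)}$) the field $K'\cdot k$ has transcendence degree one over $k$. After enlarging $A'$ and fixing a $\ZZ$-finitely generated ring $A_k$ with $A_0\subseteq A_k\subset k$ suitably --- so that $\mathrm{Frac}(A_k)$ is algebraically closed in $K'$ (possible, since the algebraic part of $K'/\mathrm{Frac}(A_0)$ is a finite extension and therefore lies in $k$), and so that, after inverting finitely many elements, $\mathcal{C}^\circ:=\Spec A'$ becomes an $A_k$-scheme with $\mathcal{C}^\circ\to\Spec A_k$ smooth and with geometrically connected one-dimensional fibres --- the $k$-fibre $C_1:=\mathcal{C}^\circ\times_{A_k}k$ is a smooth affine connected curve over $k$, and since $\mathcal{U}$ is defined over $A_0\subseteq A_k$ we obtain a canonical bijection $\mathcal{U}(A')=\Hom_{A_k}(\mathcal{C}^\circ,\mathcal{U})$.

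Now fix an integer $N\geq 1$ with $N\geq\frac{\deg h-1}{2}\bigl(2g(\overline{C_1})-2+\#(\overline{C_1}\setminus C_1)\bigr)$ and choose $N$ pairwise distinct $k$-points of $C_1$; after enlarging $A_k$ and $A'$ once more in the same fashion (which preserves arithmetic hyperbolicity of $U$ over $k$ for the base-changed model) these spread out to pairwise distinct sections $\tilde c_1,\dots,\tilde c_N\colon\Spec A_k\to\mathcal{C}^\circ$. Consider the evaluation map
\[
\Hom_{A_k}(\mathcal{C}^\circ,\mathcal{U})\longrightarrow\mathcal{U}(A_k)^N,\qquad g\longmapsto(g\circ\tilde c_1,\dots,g\circ\tilde c_N).
\]
Since $\mathcal{U}(A_k)$ is finite by arithmetic hyperbolicity of $U$ over $k$, it suffices to bound each fibre of this map. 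Fix such a fibre, and let $u_1,\dots,u_N\in U(k)$ be the $k$-points underlying the corresponding tuple in $\mathcal{U}(A_k)^N$. If $g$ lies in this fibre, then $g_k\colon C_1\to U$ is a morphism with $g_k(c_j)=u_j$ for all $j$, where $c_j:=\tilde c_{j,k}$; there is at most one such constant morphism, and by Theorem~\ref{thm:main} (which applies by the choice of $N$) there are only finitely many such non-constant morphisms. Finally, $g\mapsto g_k$ is injective on the fibre: $\mathcal{C}^\circ$ is integral, $\mathcal{C}^\circ\times_{A_k}k\to\mathcal{C}^\circ$ is dominant, and $\mathcal{U}$ is separated over $A_k$, so two morphisms $\mathcal{C}^\circ\to\mathcal{U}$ over $A_k$ with the same base change to $k$ agree on a dense subscheme, hence coincide. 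Thus every fibre of the evaluation map is finite, $\Hom_{A_k}(\mathcal{C}^\circ,\mathcal{U})$ is finite, and therefore $\mathcal{U}(A')$ is finite, which completes the argument.

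The heart of the proof is the coupling of the two finiteness inputs: Theorem~\ref{thm:main} only produces finiteness of families over $C_1$ once sufficiently many fibres are prescribed, and it is precisely the arithmetic hyperbolicity of $U$ over $k$ --- via the finiteness of $\mathcal{U}(A_k)$ --- that confines the images of the chosen sections to a finite set, so that only finitely many prescriptions occur. The remaining difficulty is the bookkeeping in the spreading-out step: one must realize $\Spec A'$ as an open in a model over $A_k$ of a connected curve over $k$ carrying at least as many sections as the (a priori $A'$-dependent) lower bound on $N$ requires, which is harmless because $C_1(k)$ is infinite while $\mathcal{U}(A_k)$ remains finite.
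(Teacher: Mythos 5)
Your argument is, at bottom, the same as the paper's, just unfolded: the paper proves that $U_K$ is mildly bounded for every algebraically closed extension $K/k$ (Theorem \ref{thm:mod_is_mb}, a direct consequence of Theorem \ref{thm:main}) and then invokes Theorem \ref{thm:mb} (= \cite[Theorem~4.4]{JAut}), which is exactly the ``mild boundedness $+$ arithmetic hyperbolicity over $k$ $\Rightarrow$ arithmetic hyperbolicity over $L$'' machinery. Your reduction to $L=\overline{k(C)}$, the spreading out, the evaluation map at sections whose images lie in the finite set $\mathcal{U}(A_k)$, and the finiteness of each fibre of that map via Theorem \ref{thm:main} (plus the uniqueness of the constant map and the injectivity of $g\mapsto g_k$) is precisely a re-proof of that cited theorem in the present setting. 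What your version buys is self-containedness; what the citation buys is exactly the spreading-out bookkeeping, and that is where your write-up has a real (though repairable) gap.

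Concretely, the gap is in the choice of $A_k$: you take $A_k\subset k$ with $\mathrm{Frac}(A_k)$ algebraically closed in $K'$, justified by the finiteness of the algebraic part of $K'/\mathrm{Frac}(A_0)$, and you assert that $\Spec A'\to\Spec A_k$ then has one-dimensional geometrically connected fibres after shrinking. The justification you give does not yield one-dimensionality: the relative algebraic closure of $\mathrm{Frac}(A_0)$ in $K'$ can be much too small. For instance, with $k\supseteq\overline{\QQ(s)}$, $L=\overline{k(t)}$ and $K'=\QQ(s+t,\,st)$, one has $K'\cap k=\QQ$, so any $A_k$ as prescribed has $\mathrm{Frac}(A_k)\subseteq\QQ$ and the fibres of $\Spec A'\to\Spec A_k$ are two-dimensional, so no curve $C_1$ is produced. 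The fix is the enlargement of $A'$ by elements of $k$ that you mention but do not justify: pick $x\in K'$ transcendental over $k$, note that $K'\cdot k$ is finite over $k(x)$, adjoin to $A'$ the finitely many coefficients (in $k$) of algebraic relations of generators of $K'$ over $k(x)$; then $K'':=\mathrm{Frac}(A'')$ is algebraic over $(K''\cap k)(x)$, the field $K''\cap k$ is finitely generated and algebraically closed in $K''$, and taking $A_k$ with fraction field $K''\cap k$ gives the desired curve fibration (the same care is needed when you later enlarge $A_k$ and $A'$ to spread the $N$ chosen points into sections). With this repair your argument is correct; it is exactly this bookkeeping that the paper's appeal to \cite[Theorem~4.4]{JAut} encapsulates.
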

 
 This result is crucially used in \cite{JM} to extend finiteness results for smooth hypersurfaces in abelian varieties over number fields due to Lawrence-Sawin (see \cite{LawrenceSawin}) to the setting of abelian varieties over finitely generated fields of characteristic zero.
 

\subsection{An open problem} If $X$ is a Brody hyperbolic proper variety over $\CC$, then Urata proved that, for $(C,c)$ a smooth connected pointed curve over $\CC$ and $x$ a   point of $X$,  the set of morphisms  $f:C\to X$ with $f(c) = x$ is finite; see \cite{Urata}.  It follows that, if $U$ is  a \emph{projective} variety over $\CC$ admitting a (quasi-)finite morphism to $\mathcal{M}\otimes_\QQ \CC $, then the set of pointed maps $f:(C,c)\to (U,u)$ is finite; see \cite[Example~2.3]{JLitt} for details.  Thus, for $U$ a projective variety,  the conclusion of Theorem \ref{thm:main} can be strengthened; one only needs to choose one point on the curve $C$ to obtain the finiteness of the set of pointed maps. The Lang-Vojta conjecture formulated above (Conjecture \ref{conj:lv}) actually predicts  that this one-pointed finiteness property should hold for \emph{every} quasi-projective variety admitting a quasi-finite morphism to $\mathcal{M}\otimes_\QQ \CC$:

   \begin{conjecture}[Pointed Shafarevich Conjecture]\label{conj} Let $U$ be a variety over $\CC$ which admits a quasi-finite morphism $U\to \mathcal{M}\otimes_{\QQ} {\CC}$. If $C$ is a smooth quasi-projective connected curve over $\CC$,   $c\in C(\CC)$, and   $u\in U(\CC)$, then   the set of morphisms $f:C\to U$ with $f(c)=u$ is finite.
  \end{conjecture}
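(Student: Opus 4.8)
The plan for attacking Conjecture~\ref{conj} is to split it into a boundedness statement and a rigidity statement: letting $\Hom_{(c,u)}(C,U)$ be the scheme of morphisms $f:C\to U$ with $f(c)=u$, it suffices to show (i) that $\Hom_{(c,u)}(C,U)$ is of finite type over $\CC$, and (ii) that it is $0$-dimensional (equivalently, that every such $f$ is rigid). For (i) I would argue exactly as in the proof of Theorem~\ref{thm:main}: fix a smooth projective model $\overline C\supseteq C$ and a smooth projective compactification $\overline U\supseteq U$ with $D:=\overline U\setminus U$ a simple normal crossings divisor; every non-constant $f:C\to U$ extends to $\overline f:\overline C\to\overline U$ with $\overline f^{-1}(D)\subseteq\overline C\setminus C$ (since $f(C)\subseteq U$), and the Kov\'acs-Lieblich boundedness of $\Hom(\overline C,\mathcal M_h)$, transported through the quasi-finite morphism $U\to\mathcal M_h\otimes_\QQ\CC$, gives (i). Thus everything comes down to the rigidity statement (ii).

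The proper case of (ii) --- indeed the whole conjecture when $U$ is proper --- is already known: by Viehweg-Zuo \cite{VZ} the variety $U$ is Brody hyperbolic, and by \cite{Schumacher,ToYeung} even Kobayashi hyperbolic, so Urata's theorem \cite{Urata} applies and directly yields finiteness of $\{f:C\to U : f(c)=u\}$; see \cite[Example~2.3]{JLitt}. The genuinely new content is therefore the quasi-projective case, i.e.\ a rigidity theorem for pointed maps into a non-proper $U$ with only \emph{one} marked point, strengthening Theorem~\ref{thm:pointed_rigidity}, which currently requires fixing about $\frac{\deg h-1}{2}\bigl(2g(\overline C)-2+\#(\overline C\setminus C)\bigr)$ fibres.

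I see two natural routes to such a single-point rigidity, both built on the Viehweg-Zuo sheaf $\mathcal A\hookrightarrow\mathrm{Sym}^{m}\Omega^1_{\overline U}(\log D)$, which is a big sub-line bundle of a symmetric power of the log cotangent bundle. \emph{Route A (a pointed Arakelov inequality).} Given a non-trivial one-parameter deformation $F:C\times T\to U$ of $f$ with $F(c,t)=u$ for all $t$, one wants to reproduce the proof of Theorem~\ref{thm:pointed_rigidity} --- which, roughly, uses $F$ to produce a non-trivial map from $\overline f^{*}\mathcal A$ to a symmetric power of $\Omega^1_{\overline C}(\log(\overline C\setminus C))$ twisted down by the fixed points, and then contradicts it via the bound $\deg_{\overline C}\overline f^{*}\mathcal A\le m\bigl(2g(\overline C)-2+\#(\overline C\setminus C)\bigr)$ together with an Arakelov-type lower bound coming from the underlying variation of Hodge structure --- but now with a single fixed point. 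The missing ingredient is a refinement of that Arakelov inequality in which one marked point $c$ ``absorbs'' as much of the negativity of $\overline f^{*}\Omega^1_{\overline U}(\log D)$ as $2g(\overline C)-2+\#(\overline C\setminus C)$ generic points do, so that already $N=1$ (and $N=0$ when $\deg h=1$) forces the deformation to be trivial. \emph{Route B (hyperbolic embeddability).} Alternatively one would show that $U$ is hyperbolically embedded in $\overline U$ --- plausibly by exploiting that $\mathcal A$ extends over $D$ with only logarithmic poles --- and then invoke Noguchi-type extension theorems for holomorphic maps into hyperbolically embedded targets, which would immediately give finiteness of the pointed maps $f:C\to U$.

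\textbf{Main obstacle.} The crux, and the reason this remains a conjecture, is precisely the non-proper case of (ii). A family of pointed maps $C\to U$ is constrained by the condition $f(c)=u$ only to first order, whereas the negativity available in $\overline f^{*}\Omega^1_{\overline U}(\log D)$ scales with $2g(\overline C)-2+\#(\overline C\setminus C)$; dually, one does not control how such a family can ``leak'' multiplicity toward the boundary divisor $D$. Both routes above run into open problems of essentially the same depth: a pointed Arakelov/Higgs inequality strong enough for Route A, and the hyperbolic embeddability of $U$ (or, equivalently, a pointed strengthening of Campana-P\u{a}un's positivity theorem \cite{CP} implying pointed algebraic hyperbolicity) for Route B. In fact the statement is nothing other than the implication $(1)\Rightarrow(3)$ of Conjecture~\ref{conj:lv}, applied to the variety $U$.
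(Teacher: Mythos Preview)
The statement you address is Conjecture~\ref{conj}, which the paper does \emph{not} prove: it is explicitly listed as an open problem. Your write-up is accordingly not a proof but an analysis of why the conjecture is hard, and as such it is accurate and well-aligned with the paper's own discussion. Your boundedness/rigidity decomposition is exactly how the paper proves the weaker Theorem~\ref{thm:main}; you correctly note that boundedness (i) is already available (Kov\'acs--Lieblich, Viehweg's Arakelov inequality), that the proper case of (ii) follows from Urata plus Viehweg--Zuo, and that single-point rigidity in the non-proper case is the open core. Your identification with the implication $(1)\Rightarrow(3)$ of Conjecture~\ref{conj:lv} for $U$ matches the paper verbatim.

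Two minor remarks. First, the paper records two further known cases you omit: $\dim\mathcal{M}_h=1$ (de Franchis--Severi) and $U$ admitting a quasi-finite period map (via Deligne's finiteness for monodromy representations and Schmid's Rigidity Theorem; see \cite[Theorem~1.5]{JLitt}). Second, your Route~A slightly misdescribes the mechanism of Theorem~\ref{thm:pointed_rigidity}: the paper does not pull back a Viehweg--Zuo subsheaf $\mathcal{A}$ to $\overline{C}$ and bound its degree; rather, from the deformation it builds a non-zero map $\xi:\mathcal{O}_C(B)\to\hat{f}^*_tF^{d-1,1}$, iterates the Higgs field, compares with a genuine Hodge bundle via the maps $\rho^{p,q}$, and uses Griffiths' curvature formula to force $\sum_{j=0}^m\deg_C\bigl(\mathcal{O}_C(B)\otimes T_C(-\log D)^{\otimes j}\bigr)<0$. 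The number of required base points comes from the iteration length $m\le d-1$, not from an Arakelov-type lower bound on $\deg\overline{f}^*\mathcal{A}$; this distinction matters if one hopes to sharpen the method along your Route~A.
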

  
 We stress that, as $U$ is Brody hyperbolic,  Conjecture \ref{conj} is a consequence of Conjecture \ref{conj:lv}. Also, we emphasize that Mori's bend-and-break is a crucial ingredient of Urata's proof and that it is the lack of a suitable quasi-projective analogue of Mori's bend-and-break (as discussed before) that prevents us from  extending Urata's theorem to the quasi-projective setting.

   Roughly speaking, the Pointed Shafarevich Conjecture asserts the finiteness of   families over $C$ with one ``fixed fibre'' over one ``fixed point''  of $C$. This conjecture (Conjecture \ref{conj}) is   known  if $\deg h =1$ (so that $\mathcal{M}_h$ is a connected component of the moduli stack of smooth proper curves of genus at least two). Indeed, in this case   Conjecture \ref{conj} is a direct consequence of Arakelov-Parshin's result \cite{ArakelovShaf, Parshin3}. It is also known if  $\dim \mathcal{M}_h =1$ in which case it follows from   a well-known stacky generalization of   the theorem of de Franchis.  As we already mentioned, Conjecture \ref{conj} is also known if $U$ is proper (see \cite[Example~2.3]{JLitt} for details). In the non-proper case,  one can show that Conjecture \ref{conj} holds if   $U$ is  smooth, affine, and \emph{hyperbolically embeddable}; see \cite[Theorem~1.4]{JLevin}.   Finally, if $U$ admits a quasi-finite period map (e.g., the   fibres of the family $V\to U$ satisfy infinitesimal Torelli), then Conjecture \ref{conj} is a consequence of Deligne's finiteness theorem for monodromy representations \cite{DeligneMonodromy} and the Rigidity Theorem in Hodge theory  \cite[7.24]{Schmid}; see  \cite[Theorem~1.5]{JLitt} for a detailed proof.  
 
\subsection{A remark on stackyness}
We restrict ourselves to varieties $U$ with a quasi-finite morphism to the stack $\mathcal{M}$ and do not work directly with the stack $\mathcal{M}$ itself only to avoid technicalities such as resolving singularities of the stack $\mathcal{M}$, the theory of Higgs bundles on stacks, and variations of Hodge structure on stacks.   

Another reason for not working with the stack directly is that  a simple argument (similar to Chevalley-Weil's theorem) shows that our main result can be \emph{used} to prove that the stack $\mathcal{M}$ satisfies similar finiteness properties, under the assumption that  $\mathcal{M}$ is \emph{uniformizable}, i.e., there is a scheme $M$ and a finite \'etale surjective morphism $M\to \mathcal{M}$.  Examples of components of $\mathcal{M}$ which are uniformizable are  the stack $\mathcal{C}_g$ of curves of genus $g\geq 2$, as well as the stack of smooth  hypersurfaces in projective space \cite{JLlevel} and the stack of very canonically polarized varieties \cite{PoppIII}.  In light of these examples, it seems reasonable to suspect that the stack $\mathcal{M}\otimes_{\QQ} \CC$   is in fact uniformizable. We   formulate our expectation  as a conjecture:

 \begin{conjecture}[Uniformizability of $\mathcal{M}$]
The stack $\mathcal{M}_{h}$ over $\QQ$ of   polarized smooth proper geometrically connected varieties  with semi-ample canonical bundle is uniformizable.
 \end{conjecture}

 \begin{ack} The first named author gratefully acknowledges support of  the IHES.
 The authors gratefully acknowledge support from SFB/Transregio 45. We are most grateful to the referees for  many helpful comments. 
 \end{ack}
 
 \begin{con} A variety over $k$ is an integral quasi-projective scheme over $k$.
 A pair $(X,L)$ is a (smooth projective connected) polarized variety over $k$ if $X$ is a smooth projective connected variety over $k$ and $L$ is an ample line bundle on $X$.
 
 If $L/k$ is an extension of fields and $U$ is a variety (or  algebraic stack) over $k$, then we let $U_L $ denote the base-change of $U$ to $\Spec L$. We will  sometimes denote $U_L$ by $U\otimes_k L$ to avoid  confusion.
 \end{con}

\section{The moduli space of pointed maps}
If $A$ and $B$ are projective schemes over $k$, as is explained in \cite[\S 2.1]{DebarreBook},  the functor \[ \mathrm{Sch}/k^{\textrm{op}} \to \mathrm{Sets}, \quad T\mapsto \Hom_T(A_T, B_T)\] is representable by a scheme $\underline{\Hom}_k(A,B)$. This scheme is a countable union \[\underline{\Hom}_k(A,B) = \sqcup_{P\in \QQ[t]} \underline{\Hom}_k^P(A,B)\] of quasi-projective schemes $\underline{\Hom}_k^P(A,B)$, where  $P$ runs over all Hilbert polynomials (computed with respect to a fixed choice of ample line bundle on $A$ and $B$, respectively).

Let $C$ be a smooth quasi-projective curve over $k$ with smooth projective model $\overline{C}$. Let $U$ be a dense open of a projective variety $X$ over $k$.  Write $S:=X\setminus U$. We consider the set $\Hom(C,U)$ of morphisms $f:C\to U$ as a subset of the set of  $k$-points of the   scheme $\underline{\Hom}_k(\overline{C}, X)$.   The following well-known structure result is proven   in  \cite[5.1~Definition-Lemma]{keelmckernan} (see also \cite[Corollary~3.11]{BJR}). 

\begin{proposition}[Scheme structure] The subset   $\Hom(C,U)$ is locally closed in  $\underline{\Hom}_k(\overline{C},X) $.  In fact,  $\underline{\Hom}_k(\overline{C},S)$ is a closed subscheme of $\underline{\Hom}_k(\overline{C},X)$   and $\Hom(C,U)$ is closed in the open subscheme  $\underline{\Hom}_k(\overline{C},X)\setminus \underline{\Hom}_k(\overline{C},S)$ of $\underline{\Hom}(\overline{C},X)$.
\end{proposition}

We let $\underline{\Hom}(C,U)$ be the locally closed reduced subscheme of $\underline{\Hom}_k(\overline{C},X)$ whose $k$-points are the morphisms $f:\overline{C}\to X$ with $f(C)\subset U$.  We stress that this scheme might not represent the functor $T\mapsto \Hom(C_T,U_T)$.

 \subsection{Boundedness}
Our finiteness result relies on  the following boundedness result of
Ascher-Taji \cite{AT}.

\begin{theorem}[Boundedness]\label{thm:boundedness} If $U$ is a variety which admits a quasi-finite morphism to $\mathcal{M}\otimes_{\mathbb{Q}} k$ and  $C$ is  a    smooth quasi-projective curve, then the  scheme $\underline{\Hom}(C,U)$  is of finite type over $k$. 
\end{theorem}

We   note that weaker (but related) results were also obtained     by Deng--Lu--Sun--Zuo  \cite[Theorem~D]{DLSZ} and Viehweg--Zuo \cite[Theorem~6.2]{VZ02}.

We stress that Theorem \ref{thm:boundedness} is   new in the setting of polarized varieties with non-ample  canonical bundle. Indeed, if $U$ parametrizes a family of canonically polarized varieties with quasi-finite moduli map, then Theorem \ref{thm:boundedness} follows from the boundedness theorem of Kov\'acs-Lieblich \cite{KovacsLieblich}.

\subsection{Infinitesimal deformations} 
Given a morphism of smooth projective varieties $f:X\to Y$, the space of infinitesimal deformations of $f$ (keeping $X$ and $Y$ fixed) is given by $\mathrm{H}^0(X, f^\ast T_Y)$ (see \cite[\S 2.3]{DebarreBook}).   If  in addition  $B\subset X$ is a finite closed subset and X is one-dimensional, then the space of infinitesimal deformations of $f$ for which $f|_B$ does not change is given by  
$
\mathrm{H}^0(X,f^*T_Y \otimes \mathcal{O}_C(-B)).$  We will need a  generalization of this fact to the quasi-projective setting.

\begin{proposition}[Infinitesimal deformations]\label{log_deform}
Let $C$ be a smooth quasi-projective curve with smooth projective model $\overline{C}$, let $D=  \overline{C}\setminus C$,   let $B\subset C$ be a finite set of closed points, let $Y$ be a smooth projective variety and let $S$ be a simple normal crossings divisor on $Y$. 

Let $f:\overline{C}\to Y$ be a morphism  with $f^{-1}(S) = D$. The space  of infinitesimal deformations $g$ of $f$ satisfying $g^{-1}(S) = D$ and $g|_B = f|_B$   is    the subspace $ \mathrm{H}^0(C,f^*T_Y(-\log S) \otimes \mathcal{O}_C(-B))$ 
  of $\mathrm{H}^0(\overline{C}, {f}^\ast T_Y)$.
\end{proposition}

\begin{proof}
We adapt  the proof of \cite[Proposition~5.3]{keelmckernan}   to our setting.

Write $S= \sum^q_{i=1}S_i$ with $S_i$ the irreducible components of $S$.  
By \cite[Lemma~5.2]{keelmckernan},  we have the following short exact sequences  
  \begin{align}
    \label{sxs_def_1}
    0 \to T_Y(-\log S) \to T_Y \to \bigoplus^q_{i=1}\mathcal{O}_{S_i}(S_i) \to 0
  \end{align}
  and
  \begin{align}
    \label{sxs_def_2}
    0 \to \mathcal{O}_{S_i} \to T_Y(-\log S)|_{S_i} \to T_{S_i}(-\log [S_i \cap (S-S_i)]) \to 0
  \end{align}


  Define $D_i:= f^*S_i$. 
By \cite[\S 2.9]{DebarreBook},  the  space of infinitesimal deformations $g$ of $f$ with $f
_B = g|_B$ is given by $\mathrm{H}^0(\overline{C}, f^*T_Y \otimes \mathcal{O}_C(-B))$. On the other hand, using \eqref{sxs_def_1}, \eqref{sxs_def_2}, and the fact that $B \subset C$, we obtain the commutative diagram
\[
  \xymatrix{
  0 \ar[r] & \mathrm{H}^0(\overline{C}, f^*T_Y(-\log S_i) \otimes \mathcal{O}_C(-B)) \ar[r] \ar[d] & \mathrm{H}^0(\overline{C}, f^*T_Y \otimes \mathcal{O}_{\overline{C}}(-B)) \ar[r] \ar[d]^r & \mathrm{H}^0(\overline{C},f^*\mathcal{O}_{S_i}(S_i)) \ar@{=}[d] \\
  0 \ar[r] & \mathrm{H}^0(\overline{C}, (f|_{D_i})^*T_{S_i}) \ar[r] & \mathrm{H}^0(\overline{C}, (f|_{D_i})^*T_Y) \ar[r]^-z  &  \mathrm{H}^0(\overline{C}, (f|_{D_i})^*\mathcal{O}_{S_i}(S_i))
  }
\]

Note that the  space of infinitesimal deformations $g$ of $f$ which satisfy $g^{-1}(S_i) = D_i$ and $f|_B = g|_B$     is the kernel of the composition $z \circ r$ and thus by the diagram equal to $\mathrm{H}^0(\overline{C}, f^*T_Y(-\log S_i) \otimes \mathcal{O}_C(-B))$. Therefore,   the space  of infinitesimal deformations $g$ of $f$ satisfying $g^{-1}(S) = D$ and $g|_B = f|_B$   equals  
\begin{eqnarray*}
 \bigcap^q_{i=1}\mathrm{H}^0(f^*T_Y(-\log S_i) \otimes \mathcal{O}_C(-B))   &=& \mathrm{H}^0(C,f^*T_Y(-\log S) \otimes \mathcal{O}_C(-B)). 
\end{eqnarray*} This concludes the proof.
\end{proof}

\section{Deforming pointed families of polarized varieties} \label{section:3}
This section is devoted to proving the rigidity of polarized families of   with enough fixed fibres.  
More precisely,   we  study non-trivial deformations of   pointed curves in the moduli stack of polarized manifolds with semi-ample canonical bundle using Viehweg-Zuo sheaves. Notably, we   use negativity results of kernels of Higgs bundles to force the rigidity of families  of polarized varieties, assuming ``enough'' base points have been chosen on the curve;  see Theorem \ref{thm:pointed_rigidity} for a precise statement. Before stating the main result of this section, we introduce the necessary notation.

\subsection{Starting point: a non-rigid pointed morphism}  \label{section:notation}
Recall that  $\mathcal{M}_h$ denotes the stack of smooth proper polarized varieties $(X,L)$ whose  Hilbert polynomial (with respect to $L$) is $h$ and whose geometric fibres have semi-ample canonical bundle. 
Let $U$ be a  quasi-projective variety over $\CC$,  and let $\varphi:U\to \mathcal{M}_{h}\otimes \CC$ be a quasi-finite morphism of stacks. 

The pull-back of the universal family over $\mathcal{M}_{h}\otimes \CC$ along $\varphi$ induces a polarized variety over $U$. Concretely: there is  
 a smooth proper morphism  $\pi:V \to U$ whose geometric fibres are connected with semi-ample canonical divisor, and a  $\pi$-ample line bundle $L$ on $V$, so that  the moduli map $U\to \mathcal{M}_h\otimes \CC$ associated to the polarized variety $(V,L)\to U$  is given by $\varphi$.   We let $d:=\deg h$ denote the fibre dimension. 
 
Let $Y$ be an integral projective compactification of $U$ with $S:=Y\setminus U$ a divisor.  
Let $\psi:\hat{Y}\to Y$ be a proper birational surjective morphism with $\hat{Y}$ a smooth projective variety    such that the following statements hold:
\begin{enumerate}
\item Let $\widehat{U}$ be the inverse image of $U$ in $\widehat{Y}$. Let $\widehat{V}$ be the inverse image of $\widehat{U}$ in $\widehat{X}$. Then $\widehat{V}\to \widehat{U}$ is a smooth proper morphism. 
\item The subvariety $\hat{S} = \psi^{-1}(S)$ is a simple normal crossings divisor.
\item Let  $\mathrm{Exc}(\psi)\subset Y$ be the image of the exceptional locus of $\psi$. Then the inverse image $E$ of $\mathrm{Exc}(\psi)$ along $\psi:\hat{Y}\to Y$ is a simple normal crossings divisor and $\hat{S}\cup E$ is a simple normal crossings divisor.
\end{enumerate}

 We choose a compactification $X\to Y$ of the composed map $V\to U\subset Y$,     pull-back the family $X\to Y$ along $\psi:\hat{Y}\to Y$, and then resolve the singularities of the total space   to obtain  a  surjective proper morphism $g:\hat{X}\to \hat{Y}$ of smooth projective varieties  with $\hat{\Delta}:= g^{-1}(\hat{S})$ a normal crossings divisor  on $\hat{X}$, so that the following diagram commutes
\[
\xymatrix{V \ar[d]_{\pi} \ar[rr]^{\textrm{open immersion}} & &  {X} \ar[d]   & &\hat{X}  \ar[ll] \ar[d]^{g} \\ U \ar[rr]_{\textrm{open immersion}}  & &  {Y}  & & \hat{Y}.\ar[ll]^{\psi} }
\]
We define $$F^{d-1,1}:= \left(R^1g_*T_{\hat{X}/\hat{Y}}(- \log \hat{\Delta}) \right)/ \textrm{torsion}.$$ 
This will be a  component of a graded  Higgs bundle constructed below in Section \ref{section:graded}.

We let  $T_g\subset  R^1g_*T_{\hat{X}/\hat{Y}}(- \log \hat{\Delta})$ be the torsion in the coherent sheaf $ R^1g_*T_{\hat{X}/\hat{Y}}(- \log \hat{\Delta})$. We let  $\mathrm{Supp}(T_g)$ be its support, and note that it is a proper closed subset of  $\hat{Y}$.

  Let $C$ be a smooth projective curve, let $D\subset C$ be a finite subset,  and let $B\subset C\setminus D$ be a finite closed subset.  We will establish a rigidity result for morphisms $C\to U$ which restrict to a fixed map on $B$, under a suitable assumption on the cardinality of $B$.  We will argue by contradiction and therefore assume the existence of a family of morphisms.  Let us be more precise.
  
  We \textbf{assume} that there is a smooth affine curve $T$  and a morphism $\hat{f}:C\times T\to \hat{Y}$ such that the following properties hold.    
\begin{itemize}
\item The image of $f:=\psi\circ \hat{f}:C\times T\to Y$ in $Y$ is two-dimensional (i.e., $f$ is generically finite onto its image),
\item  for every $t\in T(\mathbb{C})$, the morphism $f_t:= f|_{C\times \{t\}}:C\to Y$ is   non-constant    with $f_t^{-1}(S) = D$,
\item  for every $t\in T(\mathbb{C})$, the image of $f_t$ is  not contained in  $\mathrm{Exc}(\psi) \cup \mathrm{Supp}(T_g)$, and
\item    for every $s$ and $t$ in $T(\mathbb{C})$, we have that  $(f_s)|_B = (f_t)|_B$.   
\end{itemize}

The main result of this section (Theorem \ref{thm:pointed_rigidity})  is that the existence of the data above implies  
\[ 
\# B < \frac{d-1}{2}\Big(2g(C)-2 + \# D\Big).
\] 

Before we proceed with the proof, we fix the following notation.  

 First, we will fix throughout  a point $t_0\in T(\mathbb{C})$.  Define $f_0 = f|_{C\times \{t_0\}}$ and $\hat{f_0} = \hat{f}|_{C\times \{t_0\}}$.  If $t\in T(\mathbb{C})$, we define $f_t = f|_{C\times \{t\}}$ and $\hat{f}_t := \hat{f}|_{C\times \{t\}}$.  It will be helpful to view all $f_t$ (with $t\in T(\mathbb{C})$) as deformations of $f_0$.  Most of our arguments below   hold for all $t$ in $T(\mathbb{C})$, and we will specialize to the case $t=t_0$ only in specific situations (e.g., Definition \ref{def:xi}).

Define $B_1$ to be the subset of points $b$ in $B$ such that $f_0(b) \in \mathrm{Exc}(\psi)$, and let $B_2 = B\setminus B_1$. Since  $\psi$ is an isomorphism over $Y\setminus \mathrm{Exc}(\psi)$, we may consider  $f_0(B_2)\subset   Y\setminus \mathrm{Exc}(\psi)\subset Y$  as a subset of $\hat{Y}$.     Note that    $\hat{f_0}$ is non-rigid as an element of the space   $$\mathrm{Hom}\left( (C, D+B_1), (\hat{Y},\hat{S}+E); B_2\right)$$ of morphisms $f:C\to \hat{Y}$ with $f^{-1}(\hat{S}+E) = D \cup B_1$ and $f|_{B_2}$ fixed.

  \subsection{The construction of the map $\xi$}\label{section:xi}
In what follows, we will   use the  data above to construct, for $t$ a   point of $T(\mathbb{C})$, a non-zero map
$$
\xi: \mathcal{O}_C(B) \to \hat{f}^*_tF^{d-1,1}.$$ 
We define $E_0:=\psi^{-1}(f_0(B_1))\subset \hat{Y}$.   By definition, as $\hat{f}$ deforms $\hat{f}_0$ inside   $\mathrm{Hom}\left( (C, D+B_1), (\hat{Y},\hat{S}+E); B_2\right)$  and  $(f_t)|_B = (f_0)|_B$ for every $t \in T(\mathbb{C})$, we obtain 
  the following commutative diagram:  
\[
\xymatrix{
T \times B_1 \ar[r] \ar@{^{(}->}[d] & E_0 \ar@{^{(}->}[d] \\
T \times C   \ar[r]^-{\hat{f}}        & \hat{Y}.
}
\]
Consider the Kodaira-Spencer map (composed with the quotient map)
\[
\tau^{d,0}:\, T_{\hat{Y}}(-\log \hat{S}) \to F^{d-1,1}.
\]

Since the pull-back family $g:\hat{X} \to \hat{Y}$ is trivial along  $E_0 \subset \hat{Y}$,   the  composed map
\[
\mathcal{O}_{E_0} \xrightarrow{\tilde{\tau}^{d,0}|_{E_0}} (F^{d-1,1} \otimes \Omega^1_{\hat{Y}}(\log \hat{S}))|_{E_0} \to F^{d-1,1}|_{E_0} \otimes \Omega^1_{E_0}
\]
is zero, where $\tilde{\tau}^{d,0}$ is the map induced by the Kodaira-Spencer map $\tau^{d,0}$.

Thus, as  the following diagram
\begin{equation}\label{ks}
  \xymatrix{
    &  (\hat{f}|_{T \times B_1})^*(F^{d-1,1}|_{E_0} \otimes \Omega^1_{E_0}) \ar[r] &  \hat{f}^*F^{d-1,1}|_{T \times B_1} \otimes \Omega^1_{T \times B_1} \\
    \mathcal{O}_{T \times B_1} \ar[r] \ar[ur]^{\equiv 0} & \hat{f}^*(F^{d-1,1} \otimes \Omega^1_{\hat{Y}}(\log \hat{S}))|_{T \times B_1} \ar[r] \ar[u] & \hat{f}^*F^{d-1,1}|_{T \times B_1} \otimes \Omega^1_{T \times C}(\log (T \times D))|_{T \times B_1} \ar[u]
      }
\end{equation} commutes, it follows that  the restricted Kodaira-Spencer map $T_{T \times B_1} \to \hat{f}^*F^{d-1,1}|_{T \times B_1}$ is zero.

Note that the log tangent bundle of the product surface decomposes as the direct sum
\[
T_{T \times C}(-\log (T \times D)) = p^*_1T_T \oplus \left(p^*_2T_C(-\log D)\right).
\]
Moreover, note that \[p^*_1T_T|_{T \times B_1} = T_{T \times B_1}.\] Since the family over $T\times (C\setminus D) $ has maximal variation in moduli (i.e., two-dimensional image in $\hat{Y}$),    the composed map 
\[
\mathcal{O}_C \cong p^*_1T_T|_{\{t\} \times C} \hookrightarrow  \hat{f}^*_tT_{T \times C}(-\log (T \times D)) \xrightarrow{\mathrm{d} \hat{f}} \hat{f}^*_tT_{\hat{Y}}(-\log \hat{S}) \xrightarrow{\hat{f}^*_t\tau^{d,0}} \hat{f}^*_t F^{d-1,1}
\] 
is non-zero.  (Here we use that the Kodaira-Spencer map is generically injective.) Moreover, 
since the  deformation $\hat{f}:\, T \times C \to \hat{Y}$ of $f_0$ maps $B_2$ to a fixed subset of $Y\setminus \mathrm{Exc}(\psi)\subset \hat{Y}$, it follows from  Proposition~\ref{log_deform} that  the composition of the first two maps $\mathcal{O}_C \to \hat{f}^*_tT_{\hat{Y}}(-\log \hat{S})$   factors through
\[
\mathcal{O}_C \to \hat{f}^*_tT_{\hat{Y}}(-\log \hat{S})(-B_2).
\]
 
\begin{proposition}\label{prop:xi}
The composed map  \[\mathcal{O}_C \to \hat{f}^*_tT_{\hat{Y}}(-\log \hat{S})(-B_2) \to \hat{f}^*_tF^{d-1,1}(-B_2)\] factors through a map \[\mathcal{O}_C \to \hat{f}^*_tF^{d-1,1} \otimes \mathcal{O}_C(-B).\]
\end{proposition}
\begin{proof}
 By Diagram~(\ref{ks}),  the composed map 
\[
\mathcal{O}_C \to \hat{f}^*_tT_{\hat{Y}}(-\log \hat{S})(-B_2) \to \hat{f}^*_tF^{d-1,1}(-B_2)\to \hat{f}^*_tF^{d-1,1}|_{B_1}\]   
is the zero map. Thus, from the following diagram  
\[
\xymatrix{
0 \ar[r] & \mathcal{O}_C(-B_1) \ar[r] \ar[d] & \mathcal{O}_C \ar[r] \ar[d] \ar@{-->}[ld]_{\exists !}  & \mathcal{O}_{B_1}  \ar[r] \ar[d]^{\equiv 0} &  0 \\
0 \ar[r] & \hat{f}^*_tF^{d-1,1} \otimes \mathcal{O}_C(-B) \ar[r] & \hat{f}^*_tF^{d-1,1}(-B_2) \ar[r]    & \hat{f}^*_tF^{d-1,1}|_{B_1}  \ar[r] &  0
}
\]
we deduce that the   map  $\mathcal{O}_C \to \hat{f}^*_tF^{d-1,1}(-B_2)$ factors through \[\mathcal{O}_C \to \hat{f}^*_tF^{d-1,1} \otimes \mathcal{O}_C(-B).\]
 This concludes the proof.
\end{proof}
\begin{definition}\label{def:xi} For every $t$ in $T(\mathbb{C})$, 
we define   \[ 
\xi: \mathcal{O}_C(B) \to \hat{f}^*_tF^{d-1,1}.
\]  to be the map obtained after tensoring $\mathcal{O}_C \to \hat{f}^*_tF^{d-1,1} \otimes \mathcal{O}_C(-B)$  in Proposition \ref{prop:xi} with $\mathcal{O}_C(B)$. 
\end{definition}  

\begin{remark}\label{remark:xi}
If  $Y $ is smooth, then   the construction of $\xi$ is straightforward. Indeed, in this case, the map $\xi$ is  obtained by composing the  map $\mathcal{O}_C(B)\to f_0^\ast T_Y(-\log S)$ (induced by the non-rigidity of $f_0$) with the Kodaira-Spencer map. The general construction of $\xi$ is slightly more complicated due to the possible singularities of $U$.  
\end{remark}

The map $\xi$   is crucial in our proof of Theorem \ref{thm:pointed_rigidity} below. In fact, in the proof of Theorem \ref{thm:pointed_rigidity} below, we will combine the existence of the  map $\xi$   with Viehweg-Zuo's construction of certain graded Higgs bundles to prove an upper bound on $\# B$.
 
 \subsection{Viehweg-Zuo's graded Higgs bundle}\label{section:graded}
 Let $T^q_{\hat{X}/\hat{Y}}(-\log \hat{\Delta})$ denote the $q$-th wedge product of the relative log tangent sheaf $T_{\hat{X}/\hat{Y}}(-\log \hat{\Delta})$.
 As we alluded to above, the    sheaf $F^{d-1,1}$ is a piece of a graded Higgs bundle constructed by Viehweg-Zuo. More precisely, 
by Viehweg-Zuo's construction   \cite[\S 4]{VZ02}, there is a graded Higgs bundle $(F,\tau)$ associated to the family $g:\, \hat{X} \to \hat{Y}$ with bi-graded structure $(\bigoplus_{p+q =d} F^{p,q}, \bigoplus_{p+q =d} \tau^{p,q})$, where
\[
F^{n-q,q} = F^{p,q} := \left(R^qg_* T^q_{\hat{X}/\hat{Y}}(-\log \hat{\Delta})\right) / {\mathrm{torsion}}.
\] 
and  the morphism $\tau^{p,q}$ is induced by the edge morphism of a long exact sequence of higher direct image sheaves
\[
\tau^{p,q} :\, R^qg_* T^q_{\hat{X}/\hat{Y}}(-\log \hat{\Delta}) \to R^{q+1}g_* T^{q+1}_{\hat{X}/\hat{Y}}(-\log \hat{\Delta}) \otimes \Omega^1_{\hat{Y}}(\log \hat{S}).
\] 

 By construction, the first Higgs map $$\tau^{d,0}:\, F^{d,0} \cong \mathcal{O}_{\hat{Y}} \to F^{d-1,1} \otimes \Omega^1_{\hat{Y}}(\log \hat{S})$$ is   the Kodaira-Spencer class (inducing the Kodaira-Spencer map above).  
 Let $\tau_C$ be the composed morphism \[\hat{f}^*_{t}F \xrightarrow{\hat{f}^*_t\tau} \hat{f}^*_{t}F \otimes \hat{f}^*_t\Omega^1_{\hat{Y}}(\log \hat{S}) \xrightarrow{\mathrm{Id} \otimes \mathrm{d}\hat{f}_t} \hat{f}^*_{t}F \otimes \Omega^1_C(\log D).\]
 Composing the non-zero map $\xi$ with   $\tau_C^{d-1,1}$,  we obtain the following sequence of maps
\begin{eqnarray}\label{eqn}
\mathcal{O}_C(B) \xrightarrow{\xi}  \hat{f}^*_tF^{d-1,1} \xrightarrow{\tau^{d-1,1}_C} \hat{f}^*_{t}F^{d-2,2} \otimes \Omega^1_C(\log D).
\end{eqnarray}
Tensoring the composed map $(\ref{eqn})$ above with $T_C(-\log D)$, one obtains
\[
\mathcal{O}_C(B) \otimes T_C(-\log D) \to  \hat{f}^*_tF^{d-1,1} \otimes T_C(-\log D) \to \hat{f}^*_tF^{d-2,2}
\]
 
 Recall that $t$ in $T(\mathbb{C})$ was any point. In particular, we can take $t$ to be $t_0$ (the point fixed in Section \ref{section:notation}).
\begin{definition}\label{definition:iterated}
 For every $m\geq 0$,   by iterating the above construction, we obtain the following map of sheaves
\[
\tau^m:\, \mathcal{O}_C(B) \otimes T_C(-\log D)^{\otimes m} \longrightarrow f^*_0F^{d-m-1,m+1}.
\qedhere \]
\end{definition}
The maps $\tau^m$ are ``pointed'' variants of iterated Higgs maps; see   \cite{KovacsStrong} and \cite{ViehwegZuoCM2} for the classical iterated Higgs maps, respectively.

 \begin{remark}[A brief history of Viehweg-Zuo sheaves]
In a series of papers \cite{VZ01, VZ02, VZ}, Viehweg-Zuo proved the negativity of the kernel of the Higgs map $\tau$ and used this negativity to derive the Brody hyperbolicity of the base space $U$, assuming the fibres of $V\to U$ have ample canonical bundle (see \cite{Deng} for the general case). The idea is to construct a new family of varieties such that there is a natural comparison map from the Higgs bundle $(F,\tau)$ to the Hodge bundle $(E,\theta)$ of the new family twisted with a certain anti-ample line bundle $A^{-1}$, so that,  as a consequence, we get a nonzero map from $\mathrm{Ker}(\tau^{d-m,m})$ to $A^{-1} \otimes \mathrm{Ker}(\theta^{d-m,m})$. Then,  as the Griffiths curvature formula for the  Hodge bundle implies that the kernel of $\theta$ is semi-negative, it follows that $\mathrm{Ker}(\tau^{d-m,m})$ is strictly negative, so that the dual of $\mathrm{Ker}(\tau^{d-m,m})$ induces a big subsheaf of the $m$th-symmetric power of the cotangent sheaf of the base space; the latter subsheaf  is commonly referred to as  the \emph{Viehweg-Zuo subsheaf}, and its existence has proven to be very useful in proving the hyperbolicity of moduli spaces of polarized varieties. 
\end{remark}

\begin{remark}[Further applications of Viehweg-Zuo sheaves]
  Campana-P\u{a}un \cite{CP} used  the existence of Viehweg-Zuo sheaves to prove    Viehweg's hyperbolicity conjecture; see  \cite{CKT-16, Sch17} for  a detailed discussion. Also, Deng  used properties of Viehweg-Zuo sheaves to prove the hyperbolicity of the moduli space of polarized varieties with big and semi-ample canonical bundle \cite{Deng}. Moreover, the properties of Viehweg-Zuo sheaves can be used to prove rigidity of certain families of canonically polarized varieties with maximal variation in moduli \cite{ViehwegZuoDisc}.  Finally, let us mention that Popa-Schnell used Hodge modules to construct Viehweg-Zuo sheaves; see \cite{Popa-Sch}.
\end{remark}
%

 \subsection{The upper bound on $\# B$}
 To prove our main result that $\#B < \frac{d-1}{2}\Big(2g(C)-2 + \# D\Big)$, we will use the following  proposition. (The proof of this builds on techniques employed in  \cite{VZ02}.)
 
 \begin{proposition}\label{prop:vz2} Let $t\in T(\mathbb{C})$.  Then, 
 there is   an ample line bundle $A$ on $\widehat{Y}$,  a positive integer $\nu$, a ramified covering $\phi:\, Y' \to \hat{Y}$  whose branch locus does not contain the image of $f_t\colon C\to Y$, and  a modification $g':\,  {X}' \to Y'$ of   the base change of the family $g:\hat{X}\to\hat{Y}$ along $\phi:Y'\to Y$ such that $\phi^*A = A'^{\nu}$ for some ample line bundle $A'$ on $Y'$   and  such that $(\Phi^*\Omega^d_{\hat{X}/\hat{Y}}(\log \hat{\Delta}) \otimes g'^*A'^{-1})^{\otimes \nu}$ is globally generated,  where  $\Phi\colon X'\to \widehat{X}$ is  $\phi\circ g$.
 \end{proposition}
 \begin{proof}
 The idea is to use   Kawamata's covering trick. More precisely,  using that the fibres of $\pi:V\to U$ have semi-ample canonical bundle,  Viehweg-Zuo proved in {\cite[Corollary~3.6]{VZ02}} that, for $\nu >0$ large enough, the sheaf \[g_*(\Omega^d_{\hat{X}/\hat{Y}}(\log \hat{\Delta}))^{\nu}\] is big in the sense of Viehweg; see \cite[Definition~1.1]{VZ02} for a precise definition. By  \cite[Lemma~1.2]{VZ02}, there is   an ample line bundle $A$ and a positive integer $\nu$ such that 
  the sheaf \[(\Omega^d_{\hat{X}/\hat{Y}}(\log \hat{\Delta}))^{\nu} \otimes g^*A^{-1}\] is globally generated.   Then, there is a ramified covering $\phi:\, Y' \to \hat{Y}$ whose branch locus does not contain the image of $C\to Y$ such that $\phi^*A = A'^{\nu}$ for some ample line bundle $A'$ on $Y'$ (see \cite[Lemma~2.3.(a)]{VZ02}). In this way, fixing  $g':\,  {X}' \to Y'$ to be a modification of the base change of the family $g:\hat{X}\to\hat{Y}$ along $\phi:Y'\to Y$, we get sections of $(\Phi^*\Omega^d_{\hat{X}/\hat{Y}}(\log \hat{\Delta}) \otimes g'^*A'^{-1})^{\otimes \nu}$, so that it is globally generated.  
 \end{proof}
 
 We are now ready to prove the main result of this section.  
 
\begin{theorem}[Pointed Rigidity]\label{thm:pointed_rigidity}  The inequality 
\[ 
\# B < \frac{d-1}{2}\Big(2g(C)-2 + \# D\Big)
\] holds. 
\end{theorem}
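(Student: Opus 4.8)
The plan is to exploit the chain of iterated pointed Higgs maps $\tau^m$ together with the negativity of the kernel of the Viehweg--Zuo Higgs bundle, read off on the curve $C$ and measured by degrees. The starting point is the non-zero map $\xi: \mathcal{O}_C(B) \to \hat{f}^*_t F^{d-1,1}$ constructed above, and the key structural fact we would invoke from the Viehweg--Zuo machinery is that the iterated Higgs maps $\tau^{d-m-1,m+1}$ eventually vanish: since $F^{p,q} = 0$ for $q > d$ (as $T^q_{\hat X/\hat Y}(-\log\hat\Delta)=0$ once $q$ exceeds the fibre dimension), the composition $\tau^{d-1,1}\circ \tau^{d-2,2}\circ \cdots$ must terminate. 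So there is a largest integer $m_0$ with $0 \le m_0 \le d-1$ such that the composed map
\[
\tau^{m_0}:\ \mathcal{O}_C(B)\otimes T_C(-\log D)^{\otimes m_0}\ \longrightarrow\ \hat f_0^*F^{d-m_0-1,m_0+1}
\]
is non-zero but its further composition with the next Higgs map $\tau^{d-m_0-1,m_0+1}_C$ is zero. Hence the image of $\tau^{m_0}$ lands in $\hat f_0^*\big(\mathrm{Ker}(\tau^{d-m_0-1,m_0+1})\big)$.

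**Next I would** bring in the negativity of this kernel. By Viehweg--Zuo's construction (in the form used in the proof of \cite[Theorem~1.4.iv]{VZ02}, which the authors have announced they follow precisely to keep control of $d$), the kernel sheaf $\mathrm{Ker}(\tau^{p,q})$ on $\hat Y$ admits, after pulling back along $\hat f_0$, a non-zero map to $A^{-1}\otimes \mathrm{Ker}(\theta^{p,q})$ where $(E,\theta)$ is the Hodge bundle of the auxiliary family and $A$ is ample; the Griffiths curvature formula makes $\mathrm{Ker}(\theta)$ semi-negative (semi-negative sub-Hodge-bundles on a curve have non-positive degree). Restricting to the curve and using that $\hat f_0$ is non-constant so that $\hat f_0^* A$ has strictly positive degree, one concludes that $\hat f_0^*\,\mathrm{Ker}(\tau^{d-m_0-1,m_0+1})$ is a vector bundle all of whose quotient line bundles — in particular any line subbundle receiving a non-zero map from a line bundle — has \emph{strictly negative} degree, with a quantitative bound of the shape $\deg \le -\deg(\hat f_0^*A) < 0$. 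Combining this with the existence of the non-zero map $\tau^{m_0}$ gives a numerical inequality: the source line bundle $\mathcal{O}_C(B)\otimes T_C(-\log D)^{\otimes m_0}$ must have degree strictly less than (or at most) the degree of the target subsheaf, i.e.
\[
\deg\mathcal{O}_C(B) + m_0\cdot\deg T_C(-\log D)\ <\ 0 .
\]
Since $\deg T_C(-\log D) = -(2g(C)-2+\#D)$ and $\deg \mathcal{O}_C(B) = \#B$, this reads $\#B < m_0\,(2g(C)-2+\#D)$.

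**Then I would** optimize the bound. The crude estimate above only gives $\#B < (d-1)(2g(C)-2+\#D)$ since $m_0 \le d-1$. To get the sharper factor $(d-1)/2$ one uses a symmetry/duality trick standard in the Arakelov-inequality circle of ideas: the Higgs field $\tau$ and its adjoint (dual) $\tau^\vee$ provide a \emph{second} chain running in the opposite direction on the graded pieces $F^{p,q}$, so one can also start the iteration from the other end. Pairing the two chains — or equivalently applying the kernel-negativity argument both to $\mathrm{Ker}(\tau)$ and to the analogous kernel of the dual map, whose composite length is $d$ — yields that the relevant index can be taken to be at most $\lfloor (d-1)/2\rfloor$, or, after averaging the two inequalities obtained from the two directions, one obtains $2\,\#B < (d-1)(2g(C)-2+\#D)$, which is the claim. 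Concretely, if $\tau^{m_0}$ is non-zero then by Hodge-theoretic semisimplicity the "opposite" iterate is non-zero to total length $d-1-m_0$ in a complementary direction, and combining the two degree inequalities (one with coefficient $m_0$, one with coefficient $d-1-m_0$) and adding gives the factor-$2$ improvement independently of where $m_0$ sits.

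**The main obstacle** I anticipate is making the passage "non-zero pointed iterated Higgs map $\Rightarrow$ quantitative degree inequality" fully rigorous with the \emph{correct} constant: one must verify that the Viehweg--Zuo comparison map $\mathrm{Ker}(\tau^{p,q})\to A^{-1}\otimes\mathrm{Ker}(\theta^{p,q})$ survives in the \emph{pointed/logarithmic} setting and along the (possibly non-generic) curve $\hat f_0(C)$, that the auxiliary family can be built without inflating the fibre dimension (this is exactly why the $\cite[Theorem~1.4.iv]{VZ02}$ variant, rather than the self-fibre-product construction of $\cite{VZ}$, is essential), and that the semi-negativity of $\mathrm{Ker}(\theta)$ pulled back to $C$ is genuinely \emph{strict} once the twist by the ample $A$ is accounted for. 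The factor-$2$ sharpening is the second delicate point: it requires the dual Higgs chain to be available with matching hypotheses, which is the analogue on $\mathcal{M}$ of the classical Arakelov inequality $\deg E^{1,0} \le \tfrac{1}{2}\,\mathrm{rk}\,E\cdot\deg T_C(-\log D)$ for variations of Hodge structure — so one should expect the proof to route through an Arakelov-type inequality for the pointed Higgs bundle rather than a bare degree count.
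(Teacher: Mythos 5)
Your first two steps follow the paper's strategy (pointed iterated Higgs maps, comparison with the Hodge bundle of the auxiliary Viehweg--Zuo family, Griffiths curvature giving negativity after the ample twist), but the step where you pass from the crude bound $\#B < m_0\,(2g(C)-2+\#D)$ to the claimed factor $\tfrac{d-1}{2}$ is a genuine gap. Your proposed mechanism --- a ``dual'' Higgs chain running in the opposite direction, giving a second inequality $\#B < (d-1-m_0)(2g(C)-2+\#D)$ to be averaged with the first --- is not established and there is no reason it should exist here: the twist $\mathcal{O}_C(B)$ enters the construction only through the map $\xi$ into $\hat f_t^*F^{d-1,1}$, which comes from the Kodaira--Spencer class of the specific non-rigid deformation with base points $B$; there is no analogous pointed map feeding into the ``other end'' of any adjoint chain, and the deformation Higgs bundle $(F,\tau)$ is not itself a polarized Hodge bundle, so Hodge-theoretic semisimplicity does not produce the complementary non-vanishing of length $d-1-m_0$ you invoke. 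As written, your argument only yields $\#B < (d-1)(2g(C)-2+\#D)$, which is weaker than the statement.

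The paper obtains the factor $\tfrac12$ by a different and more elementary device: it does not stop at the single term $j=m_0$ and appeal to kernel negativity, but instead takes \emph{all} the images $G^{d-j,j}:=\mathrm{Im}(\zeta_{j-1})$, $j=1,\dots,m+1$, of the pointed iterated maps (composed with the comparison maps into $A'^{-1}_{C'}\otimes E_{C'}$ on the covering curve $C'$), checks via the commutative comparison diagram that $G=\bigoplus_j G^{d-j,j}$ is a sub-Higgs bundle of $A'^{-1}_{C'}\otimes E_{C'}$, and uses that the determinant of a sub-Higgs bundle of a Hodge bundle is semi-negative (nilpotency of the Higgs field plus Griffiths curvature applied to $\bigwedge^r$ of the VHS). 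Since $A'_{C'}$ is ample this forces $\deg_{C'}G<0$, and because each $G^{d-j,j}$ receives a nonzero map from $\psi^*\bigl(\mathcal{O}_C(B)\otimes T_C(-\log D)^{\otimes (j-1)}\bigr)$ one gets
\[
\sum_{j=0}^{m}\Bigl(\#B - j\,(2g(C)-2+\#D)\Bigr) < 0 ,
\]
i.e.\ $(m+1)\#B < \tfrac{m(m+1)}{2}(2g(C)-2+\#D)$, hence $\#B<\tfrac{m}{2}(2g(C)-2+\#D)\le\tfrac{d-1}{2}(2g(C)-2+\#D)$. In other words, the factor $\tfrac12$ comes from the arithmetic series $\sum_{j=0}^{m} j=\tfrac{m(m+1)}{2}$ produced by summing the degrees along the whole chain, not from any duality. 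To repair your proposal you would need to replace the averaging step by this summation argument (or supply a genuine proof of the dual-chain inequality, which the paper does not use and which appears out of reach).
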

\begin{proof} 

We choose  $A$, $\nu$, $\phi:Y'\to \hat{Y}$, $g':X'\to Y'$  and $A'$ as in  Proposition \ref{prop:vz2}. Then, following the arguments in \cite[Lemma~4.4]{VZ02},  we    obtain the  Hodge bundle $(E_{Y'},\theta_{Y'})$ over $Y'$ and the following commutative diagram  
\begin{align}\label{comm-diag}
\xymatrix{
\phi^*F^{p,q} \ar[rr]^-{\phi^\ast \tau^{p,q}} \ar[d]^{\rho^{p,q}} & & \phi^*F^{p-1,q+1} \otimes \phi^* \Omega^1_{\hat{Y}}(\log \hat{S}) \ar[d]^{\rho^{p-1,q+1} \otimes \iota}\\
A'^{-1} \otimes E^{p,q}_{Y'} \ar[rr]^-{\mathrm{Id}_{(A')^{-1}}\otimes\theta^{p,q}_{Y'}} & &  A'^{-1} \otimes E^{p-1,q+1}_{Y'} \otimes \Omega^1_{Y'}(\log (S' + S^{new})).
}
\end{align}
Here $S'$ is the base change of $\hat{S}$ to $Y'$, and $S^{new}$ is the additional discriminant locus of the new family (i.e., the singular fibres of the new family lie over $S'\cup S^{new}$). A detailed construction of the diagram above can be found in the proofs of Lemma~4.1 and Lemma~4.4.(i) of \cite{VZ02}.  By using Kawamata's covering trick again (\emph{cf.} \cite[Theorem~17]{KawamataChar}), we may and do assume that $(E_{Y'},\theta_{Y'})$ has unipotent local monodromy along the boundary. 
 
We now pull-back $\hat{f}_0:C\to \hat{Y}$ along $Y'\to \hat{Y}$, and let $\phi_C\colon C'\to C$   denote  the induced ramified covering of $C$.  Then,  we can construct  $(E_{C'},\theta_{C'})$ on $C'$ from the Hodge bundle $(E_{Y'},\theta_{Y'})$  similarly to the construction of $F_C:= \hat{f}^*_0(F,\tau_C)$. Denote by $A'_{C'}$ the pull back of the ample line bundle over $C'$.
Using the comparison maps we obtain a   map
\[
\phi_C^*F^{d-m-1,m+1}_C \to A'^{-1}_{C'} \otimes E^{d-m-1,m+1}_{C'}
\]
for each non-negative integer $m$.
 Moreover, for each such $m$, we also have the  (pointed) iterated Higgs map (see Definition \ref{definition:iterated})
 \[\tau^m : \mathcal{O}_C(B) \otimes T_C(-\log D)^{\otimes m} \to  F^{d-m-1,m+1}_C.\]  Base-changing  $\tau^m$ to $C'$, we get the composed map
\[
\zeta_m:\, \phi_C^*\mathcal{O}_C(B) \otimes \phi_C^*T_C(-\log D)^{\otimes m} \to \phi^*F^{d-m-1,m+1}_C \to A'^{-1}_{C'} \otimes E^{d-m-1,m+1}_{C'}.
\]
Let  $m\geq 0$ be such that the map $\zeta_m$ is  nonzero   and the composition $\theta^{d-m-1,m+1}_{C'} \circ \zeta_m \equiv 0$. (One may refer to this integer $m$ as the  maximal length of the (pointed) iterated Higgs map $\zeta_m$.)

To see that such an integer $m$ exists, it suffices to note that  the following composed map
\[
\zeta_0 :\,\phi_C^*\mathcal{O}_C(B) \xrightarrow{\phi^*\xi} \phi_C^*F^{d-1,1}_C \xrightarrow{\rho^{d-1,1}} A'^{-1}_{C'} \otimes E^{d-1,1}_{C'}
\]
is nonzero. 
Indeed, the fact that $\zeta_0$ is non-zero follows from the generic injectivity of $\phi^*\xi$ and $\rho^{d-1,1}$. The first map is the Kodaira-Spencer map and thus it is generically injective because of the non-isotriviality of the pull-back family. The generic injectivity of $\rho^{d-1,1}$ follows from Deng's version of the generic Torelli theorem for Viehweg-Zuo Higgs bundles (see  \cite[Theorem~D]{Deng}), as  the induced polarized family over $T\times (C\setminus D)$ has maximal variation in moduli.
In this way, we obtain that $0 \leq m \leq d-1$, where the upper bound $m\leq d-1$ follows from   the fact that $\theta^{0,d}_{C'} \equiv 0$.


 We shall use the iterated (composed) Higgs maps $\zeta_0, \zeta_1, \dots, \zeta_m$ to construct a sub-Higgs bundle $(G,\tau_G)$ of $A'^{-1}_{C'} \otimes E_{C'}$, so that we get a sub-Higgs bundle $(A'_{C'},0) \otimes (G,\tau_G)$ of the Hodge bundle $(E_{C'},\theta_{C'})$. Write $G^{d,0}:= \{0\}$ and $G^{d-j,j} = \mathrm{Im}(\zeta_{j-1})$ for $j=1,2,\dots,m+1$. Note that, for every $1\leq j \leq m+1$, the sheaf $G^{d-j,j}$ is a subsheaf  of $A'^{-1}_{C'} \otimes E_{C'}$. Moreover, it follows from the commutativity of  Diagram (\ref{comm-diag}) that
\[
\theta^{d-j,j}_{C'} (A'_{C'} \otimes G^{d-j,j}) \subset A'_{C'} \otimes G^{d-j-1,j+1} \otimes \Omega^1_{C'}(\log (D'+S^{new}_{C'})).
\]
 Define $G:= \bigoplus^{m+1}_{j=0}G^{d-j,j}$. Then, we define the Higgs   bundle $(G,\tau_G)$ by letting the Higgs map $\tau_G$ be the restriction of $\mathrm{Id} \otimes \theta_{C'}$ to $G$. Note that $(G,\tau)$ is a sub-Higgs bundle of $A'^{-1}_{C'} \otimes E_{C'}$, as required. 
 
 We now invoke Griffiths's curvature computations for variations of Hodge structures. The necessary result is provided by the following general lemma (see  \cite[Proposition~2.4]{VZArIneq} for a detailed proof).
 
 \begin{lemma}[Griffiths]
 The determinant of a sub-Higgs bundle of a Hodge bundle is semi-negative.  (Here by ``Hodge bundle'' we mean the graded Higgs bundle associated to a polarized $\mathbb{C}$-VHS with respect to the Hodge filtration.)  
 \end{lemma} 

 We now continue the proof of Theorem \ref{thm:pointed_rigidity}.
Note that  the above   construction of $G$ implies that
\begin{eqnarray*}
\sum^m_{j=0} \mathrm{deg}_C \left( \mathcal{O}_C(B) \otimes T_C(-\log D)^{\otimes j} \right) &= &  \frac{1}{\deg \phi} \sum^m_{j=0} \mathrm{deg}_{C'} \left(\phi^\ast \left( \mathcal{O}_C(B) \otimes T_C(-\log D)^{\otimes j} \right)\right)  \\   
&\leq & \frac{1}{\deg \phi} \sum_{j=0}^m \deg_{C'} G^{d-j,j} 
=  \frac{1}{\deg \phi} \cdot \mathrm{deg}_{C'} G  .
\end{eqnarray*} 
 As $A'_{C'}$ is ample and $\mathrm{det}(A'_{C'} \otimes G)$ is semi-negative, we have that   $\mathrm{deg}_{C'}G <0$. Thus, we conclude that 
 \begin{eqnarray*}
\sum^m_{j=0} \mathrm{deg}_C \left( \mathcal{O}_C(B) \otimes T_C(-\log D)^{\otimes j} \right) &\leq &   
  \frac{1}{\deg \phi} \cdot \mathrm{deg}_{C'} G   <0.
\end{eqnarray*} The statement of the theorem now readily follows. Indeed, 
 since $$\deg_C T_C(-\log D) = 2-g(C) - \#D,$$ it follows that 
\[  (m+1) \# B + \frac{m(m+1)}{2} (2-2g(C)  - \# D) =\sum^m_{j=0} \mathrm{deg}_C \left( \mathcal{O}_C(B) \otimes T_C(-\log D)^{\otimes j} \right) <0. \]
In particular, the following inequality
\[
\# B < \frac{m}{2} \, (2g(C) - 2 + \# D)
\] holds.  As   $m\leq d-1$, we infer that
\[
\# B < \frac{m}{2} \,( 2g(C) -2 + \# D) \leq \frac{d-1}{2}\,( 2g(C) -2 + \# D).
\] This concludes the proof.
\end{proof}

\section{Proof of the Weak-Pointed Shafarevich conjecture}
In this section we prove Theorem \ref{thm:main} by combining our rigidity theorem for pointed curves (Theorem \ref{thm:pointed_rigidity}) with the recently established boundedness result   \cite[Theorem~D]{DLSZ} (generalizing earlier work of Kov\'acs-Lieblich \cite{KovacsLieblich}).

\begin{proof}[Proof of Theorem \ref{thm:main}] 
By a standard specialization argument (see, for instance, the arguments in   \cite[\S~9]{vBJK}), we  may and do assume that $k=\CC$.  Let $U$ be an integral variety over $\CC$ and $U\to \mathcal{M}_{h} \otimes_{\QQ} \CC$ be a quasi-finite morphism. Let $n:=\dim U$. We argue by induction on $n$. The statement is clear for $n=0$. Thus, we may and do assume that $n>0$. 

 Let $C$ be a smooth quasi-projective connected curve over $\CC$ with smooth projective model $\overline{C}$ and $D:=\overline{C}\setminus C$, let $$N\geq \frac{\deg h-1}{2} \left(2g(\overline{C}) - 2 + \# D\right)$$ be an integer, let $c_1,\ldots, c_N$ be pairwise distinct points in $C(\CC)$, and let $u_1,\ldots, u_N$ be points of $U(\CC)$. Let $B = \{c_1,\ldots, c_N\}$.
 
 To apply our main rigidity result (Theorem \ref{thm:main}), we choose data as in Section \ref{section:3}. Thus, 
  let $Y$ be a projective compactification of $U$ with $S:=Y\setminus U$. 
Let $\psi:\hat{Y}\to Y$ be a proper birational surjective morphism with $\hat{Y}$ a smooth projective variety      $\hat{S} = \psi^{-1}(S)$ is a simple normal crossings divisor, the inverse image $E$ of $\mathrm{Exc}(\psi)$ along $\psi:\hat{Y}\to Y$ is a simple normal crossings divisor and $\hat{S}\cup E$ is a simple normal crossings divisor.
Let  $X\to Y$ be a compactification of the composed map $V\to U\subset Y$,    and let $X\to Y$ be the pull-back   along $\psi:\hat{Y}\to Y$. We resolve $X$ to obtain  a  surjective proper morphism $g:\hat{X}\to \hat{Y}$ of smooth projective varieties  with $\hat{\Delta}:= g^{-1}(\hat{S})$ a normal crossings divisor  on $\hat{X}$, so that the following diagram commutes
\[
\xymatrix{V \ar[d]_{\pi} \ar[rr]^{\textrm{open immersion}} & &  {X} \ar[d]   & &\hat{X}  \ar[ll] \ar[d]^{g} \\ U \ar[rr]_{\textrm{open immersion}}  & &  {Y}  & & \hat{Y}.\ar[ll]^{\psi} }
\]
As in Section \ref{section:3}, we let $\mathrm{Supp}(T_g)$ be the support of the torsion subsheaf of  $  R^1g_*T_{\hat{X}/\hat{Y}}(- \log \hat{\Delta})$.

Let $U_1:=\mathrm{Exc}(\psi)\cup \mathrm{Supp}(T_g)$, and note that $U_1\subset U$ is a proper closed subscheme of $U$ with $\dim U_1 <\dim U$, where we endow $U_1$ with its reduced closed subscheme structure.  Moreover,    the composed map $U_1\subset U \to \mathcal{M}_{h,\CC}$ is quasi-finite.  In particular, every irreducible component of $U_1$   admits a quasi-finite morphism to $\mathcal{M}_{h,\CC}$. Since   $\dim  U_1 < \dim U = n$, it follows from   the induction hypothesis that   the set of non-constant morphisms  $f:C\to U$  with $f(c_1) = u_1, \ldots, f(c_N) = u_N$ and such that $f$  maps into $U_1$ is finite. (Indeed, if there were infinitely many such maps, as $U_1$ has only finitely many irreducible components,  there would be infinitely many such maps mapping to  some irreducible component of $ U_1$. We   then apply the induction hypothesis to this   component.)

 Thus, to prove the theorem, it suffices to  show that the set  of non-constant morphisms $\varphi:C\to U$  with $\varphi(c_1) = u_1, \ldots, \varphi(c_N) = u_N$ and such that $\varphi(C)\not\subset U_1$   is finite.  
To do so, we argue by contradiction and assume that there are infinitely many pairwise distinct non-constant morphisms $\varphi:C\to U $ with $\varphi(c_1) = u_1, \ldots, \varphi(c_N) =N$ and such that $\varphi(C) \not\subset U_1$.  For every such morphism $\varphi:C\to U$,  we let $\overline{\varphi}:\overline{C}\to Y$ be the unique morphism  extending $\varphi$.  Now, for every such $\varphi$,  we have that $\overline{\varphi}^{-1}(S) \subset D$. In particular,  as $D$ is finite,  there are infinitely many such morphisms $\varphi:C\to U$ such that   $\overline{\varphi}^{-1}(S) = D$.

Recall that $\underline{\Hom}(C,U)$ is a finite type scheme over $\mathbb{C}$ (see Theorem \ref{thm:boundedness}).  In particular,  by our assumption that there are infinitely many morphisms $\varphi:C\to U$ satisfying the above conditions,   there is a smooth affine curve $T$, a morphism $f:C\times T \to U$,  and a   lift $\hat{f}: \overline{C}\times T\to \hat{Y}$ of $f:\overline{C}\times T \to Y$ such   that the set of morphisms $f|_{C\times \{t\}}: C\to Y$ is infinite (i.e., $f$ is a truly varying family of morphisms) and
 
  \begin{itemize}
\item  for every $t\in T(\mathbb{C})$, the morphism $\overline{f_t}:= f|_{\overline{C}\times \{t\}}:\overline{C}\to Y$ is   non-constant    with $\overline{f_t}^{-1}(S) = D$,
\item  for every $t\in T(\mathbb{C})$, the image of $\overline{f_t}$ is  not contained in  $U_1$, and
\item    for every $s$ and $t$ in $T(\mathbb{C})$, we have that  $(\overline{f_s})|_B = (\overline{f_t})|_B$.   
\end{itemize} 

We claim that the image of $f:C\times T\to U$ is two-dimensional.  (In particular, the data above also forces $\dim U\geq 2$.) Indeed, let $D\subset U$ be its image. If $D$ is zero-dimensional, then every morphism $f_t$ would be constant contradicting the fact that the morphisms $f_t$ are non-constant.  Now, assume that $\dim D =1$.   As $U$ is hyperbolic,  it follows that $D$ is hyperbolic. In particular,  by a well-known generalization of De Franchis's theorem for hyperbolic curves (see \cite{Imayoshi}), the set of non-constant (dominant) morphisms $C\to D$ is finite. This contradicts the fact that the  morphisms $f_t = f|_{C\times \{t\}} :C\to D$ form  an infinite set of pairwise distinct non-constant morphisms.

We have thus shown that $f$  satisfies all of the assumptions of Section \ref{section:3}, so that we may apply    Theorem \ref{thm:pointed_rigidity} to conclude that $$\# B =  N < \frac{\deg h - 1}{2}( 2g(\overline{C})- 2 + \#D).$$ This contradicts our assumption that 
  $$\# B= N \geq \frac{\deg h - 1}{2}( 2g(\overline{C})- 2 + \#D)$$ and concludes the proof.
\end{proof}

\begin{remark}\label{remark:singular}
The proof of Theorem \ref{thm:main} (and also Theorem \ref{thm:pointed_rigidity}) simplifies a bit when $U$ is assumed to be smooth. For example, one does not need to argue by induction on the dimension of $U$ in the above proof, as one can simply choose an snc compactification of $U$ in this case. Also, with the notation of Section \ref{section:3}, the set $B_1 $ is empty, so that (as already mentioned in Remark \ref{remark:xi}) the construction of the map $\xi$ (in Section \ref{section:xi}) is simpler when $U$ is smooth.
\end{remark}

\begin{remark}[Arakelov-Parshin's theorem]\label{remark:ap} Let us show that Arakelov-Parshin's finiteness theorem follows from Theorem \ref{thm:main}. 
Let $g\geq 2$ be an integer and let $\mathcal{C}:=\mathcal{C}_g\otimes_\QQ k$ be the stack over $k$ of smooth proper connected curves of genus $g$. Note that $\mathcal{C} = \mathcal{M}_h$, where $h(t)= (2g-2) t + (1-g)$. Since $\deg h =1$, the integer $N:=0$ satisfies the inequality 
\[
 N\geq \frac{\deg h - 1}{2}\left( 2g(\overline{C})- 2 + \#D\right).
\]
Let $U$ be a  variety over $k$ and let $\varphi:U\to \mathcal{C}$ be a finite \'etale cover, e.g., $U$ is the moduli space  $\mathcal{C}_g^{[3]}$ of curves  of genus $g$ with level $3$ structure.  Let $C$ be a smooth connected curve over $k$, and recall that a morphism $C\to \mathcal{C}$ is non-isotrivial if the composed map $C\to \mathcal{C}\to \mathcal{C}^{coarse}$ is non-constant.    

We argue by contradiction. Thus, let $f_n:C\to \mathcal{C}$ be a sequence of  pairwise-distinct non-isotrivial morphisms.  For every $n$, let $D_n$ be a connected component of $C\times_{f_n, \mathcal{C}, \varphi} U$. Since each $D_n$ is a finite \'etale cover of $C$ of degree at most $\deg(\varphi)$, the set  of isomorphism classes of the curves $D_n$ is finite. Thus, replacing $f_n$ by a subsequence if necessary, we may and do assume that $D:=D_1 \cong D_2 \cong D_3\cong\ldots$. Now, by  Theorem \ref{thm:main} (with $N=0$),  the set of non-constant morphisms $D\to U$ is finite, so that (clearly) the collection of isomorphism classes of the morphisms $f_n$  is finite, contradicting our assumption above. 
\end{remark}
 
\section{The Persistence Conjecture for moduli spaces of polarized varieties}
Throughout this section, we let   $k$ be an algebraically closed field of characteristic zero.  We start with the following definition (see also \cite[Definition~4.1]{JAut}).
 
\begin{definition}[Mildly bounded varieties]\label{defn:mild_bounded}  
A variety $X$ over  $k$ is \emph{mildly bounded over $k$} if, for every smooth quasi-projective connected curve $C$ over $ {k}$, there are an integer $m$ and points $c_1,\ldots, c_m$ in $C( {k})$ such that, for every $x_1,\ldots, x_m$ in $X( {k})$, the set of morphisms $f:C\to X$ with $f(c_1) = x_1, \ldots, f(c_m) = x_m$ is finite.
\end{definition}

With this definition at hand, we get the following consequence of Theorem \ref{thm:main} for $\mathcal{M}$, where $\mathcal{M}$ denotes (as in the introduction) the moduli stack of   polarized varieties  with semi-ample canonical bundle over $\QQ$.
\begin{theorem}[The moduli space is mildly bounded]\label{thm:mod_is_mb}
Let $K$ be an algebraically closed field of characteristic zero,  let $U$ be a variety over $K$, and let $U\to \mathcal{M}\otimes_{\QQ} K$ be a quasi-finite morphism. Then $U$ is mildly bounded over $K$.
\end{theorem}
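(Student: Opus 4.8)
The plan is to obtain this as an essentially immediate consequence of the Weak-Pointed Shafarevich Conjecture (Theorem \ref{thm:main}). First I would reduce to a fixed Hilbert polynomial: since $U$ is a variety over $K$, it is in particular integral, hence connected, so the quasi-finite morphism $U\to\mathcal{M}\otimes_\QQ K$ factors through exactly one of the open and closed substacks in the decomposition $\mathcal{M}\otimes_\QQ K=\sqcup_{h\in\QQ[t]}(\mathcal{M}_h\otimes_\QQ K)$. Thus there is a unique $h\in\QQ[t]$ and a quasi-finite morphism $U\to\mathcal{M}_h\otimes_\QQ K$ through which the given morphism factors.

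Next, given a smooth quasi-projective connected curve $C$ over $K$, write $\overline C$ for its smooth projective model and $D:=\overline C\setminus C$, and choose an integer
\[
m\ \geq\ \max\left(1,\ \tfrac{\deg h-1}{2}\bigl(2g(\overline C)-2+\# D\bigr)\right).
\]
Since $K$ is algebraically closed, $C(K)$ is infinite, so we may pick pairwise distinct points $c_1,\ldots,c_m\in C(K)$. I claim that, with this choice of $C$, $m$, and $c_1,\ldots,c_m$, the finiteness condition in Definition \ref{defn:mild_bounded} holds. Indeed, fix arbitrary $x_1,\ldots,x_m\in U(K)$. Applying Theorem \ref{thm:main} to the quasi-finite morphism $U\to\mathcal{M}_h\otimes_\QQ K$, to the curve $C$, and to the integer $N:=m$ (which satisfies $N\geq\frac{\deg h-1}{2}(2g(\overline C)-2+\#D)$), we conclude that the set of non-constant morphisms $f\colon C\to U$ with $f(c_1)=x_1,\ldots,f(c_m)=x_m$ is finite. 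Since a constant morphism $C\to U$ is uniquely determined by its value at $c_1$, there is at most one constant morphism $f$ with $f(c_1)=x_1$, and hence the set of all morphisms $f\colon C\to U$ with $f(c_i)=x_i$ for $i=1,\ldots,m$ is finite. This is exactly mild boundedness of $U$ over $K$.

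There is no real obstacle here, as the statement merely repackages Theorem \ref{thm:main}; the steps that need a word of justification are minor bookkeeping: (i) passing to a single connected component to obtain a well-defined Hilbert polynomial $h$, using the connectedness of $U$; (ii) insisting that $m\geq 1$, so that Definition \ref{defn:mild_bounded} is not vacuous — note that Theorem \ref{thm:main} permits $N=0$ when $\deg h=1$, but $m=0$ would not suffice for mild boundedness, since the set of all morphisms $C\to U$ is typically infinite; and (iii) separating off the at most one constant morphism, as Theorem \ref{thm:main} only controls the non-constant maps. One might also remark in passing that no specialization or base-change argument is required, because Theorem \ref{thm:main} is already proved over an arbitrary algebraically closed field of characteristic zero.
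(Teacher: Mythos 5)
Your proposal is correct and follows essentially the same route as the paper: factor the quasi-finite morphism through a single $\mathcal{M}_h$ using the integrality of $U$, then apply Theorem \ref{thm:main} with enough marked points determined by $\deg h$ and the Euler characteristic of $C$. Your extra bookkeeping (requiring $m\geq 1$ and separating off the at most one constant map, which Theorem \ref{thm:main} does not cover) is a slight refinement of details the paper's proof glosses over, but it is not a different argument.
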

\begin{proof}
This is a straightforward consequence of Theorem \ref{thm:main} and Definition \ref{defn:mild_bounded}.    Indeed, let $C$ be a smooth quasi-projective connected curve over $K$ with compactly supported Euler characteristic $e(C)$. Since $U$ is integral, there is a polynomial $h\in \QQ[t]$ such that the quasi-finite morphism $U\to \mathcal{M}\otimes_{\QQ} K$ factors over a quasi-finite morphism $U\to \mathcal{M}_h\otimes_{\QQ} K$. Let $m$ be a positive integer such that 
$m\geq \frac{\deg h  - 1}{2} | e(C)|$,  let  $c_1,\ldots, c_m$ in $C(K)$ be pairwise distinct points, and let $u_1,\ldots,u_m\in U(K)$. Then, by Theorem \ref{thm:main}, the set of morphisms $f:C\to U$ with $f(c_1) =u_1, \ldots, f(c_m) = u_m$ is finite. It follows that   $U$ is mildly bounded over $K$.
\end{proof}
 
A variety over $\QQ$ with only finitely many $S$-integral points in a number field is expected to have only finitely many integral points valued in any finitely generated integral domain of characteristic zero (see the Persistence Conjecture \ref{conjper} for a precise statement). If the variety in question satisfies some additional ``boundedness'' property, then this persistence can in fact be proven. For example, 
the  Persistence Conjecture   holds  in the case of varieties  satisfying mild boundedness over all field extensions; see \cite[Theorem~1.6]{JAut} for a  proof of the following statement. 
 
\begin{theorem}\label{thm:mb}
Let $X$ be an arithmetically hyperbolic variety over $k$ such that, for every algebraically closed field extension $k\subset K$, the variety $X_K$ is mildly bounded over $K$. Then, for every algebraically closed field extension $L/k$, the variety $X_L$ is arithmetically hyperbolic over $L$.
\end{theorem}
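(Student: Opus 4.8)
\emph{Strategy.} The plan is to reduce to the case where $L$ has transcendence degree one over $k$, and then to convert a hypothetical failure of arithmetic hyperbolicity of $X_L$ into an infinite family of pairwise distinct morphisms from a fixed curve over $k$ into $X$; mild boundedness of $X$ over $k$ (the case $K=k$ of the hypothesis) will then produce a contradiction via a pigeonhole argument. This is in the spirit of \cite{JAut}.

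\emph{Reductions.} First I would note that, since arithmetic hyperbolicity only involves $\ZZ$-finitely generated subrings, any witness to a failure of arithmetic hyperbolicity of $X_L$ over $L$ already lives over an algebraically closed subfield of $L$ of finite transcendence degree over $k$; thus it suffices to prove the statement (Persistence Conjecture \ref{conjper} for $X$) when $\mathrm{trdeg}_k L<\infty$. Arguing by induction on $\mathrm{trdeg}_k L$, and using that the hypothesis ``$X_K$ is mildly bounded over $K$ for every algebraically closed $K\supseteq k$'' is automatically inherited along any further extension of algebraically closed fields, it suffices to treat $\mathrm{trdeg}_k L=1$. In that case $L$ is the algebraic closure of a purely transcendental extension $k(t)$, so we may write $L=\overline{k(C)}$ for a smooth affine connected curve $C$ over $k$ (e.g.\ $C=\mathbb{A}^1_k$).

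\emph{The curve case.} Fix a $\ZZ$-finitely generated subring $A_0\subseteq k$ and a finite type separated $A_0$-scheme $\mathcal{X}_0$ with $(\mathcal{X}_0)_k\cong X$ witnessing arithmetic hyperbolicity of $X$ over $k$, so that $\mathcal{X}_0(A')$ is finite for every $\ZZ$-finitely generated $A_0\subseteq A'\subseteq k$. Since $(\mathcal{X}_0)_L\cong X_L$, if $X_L$ were not arithmetically hyperbolic over $L$ there would be a $\ZZ$-finitely generated subring $A_0\subseteq A'\subseteq L$ with $\mathcal{X}_0(A')$ infinite; let $P_1,P_2,\ldots$ be pairwise distinct elements of $\mathcal{X}_0(A')$. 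Since $A'\not\subseteq k$ (otherwise $\mathcal{X}_0(A')$ is finite), after replacing $C$ by a suitable finite cover we may assume the fraction field of $A'$ is contained in $k(C)$; letting $C_0\subseteq C$ be the dense open locus on which a fixed finite generating set of $A'$ is regular, we obtain an inclusion $A'\hookrightarrow \mathcal{O}(C_0)$ along which $A_0\hookrightarrow A'\hookrightarrow \mathcal{O}(C_0)$ is the inclusion induced by $A_0\subseteq k$. The point is that each $P_n$ now induces a morphism $C_0\to\mathcal{X}_0$ whose composite with $\mathcal{X}_0\to\Spec A_0$ factors through $\Spec k$, hence a morphism $\phi_n\colon C_0\to X$ over $k$; separatedness of $\mathcal{X}_0$ together with dominance of $C_0\to\Spec A'$ shows the $\phi_n$ are pairwise distinct.

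\emph{Conclusion and main obstacle.} By hypothesis $X$ is mildly bounded over $k$, so (Definition \ref{defn:mild_bounded}) there are an integer $m$ and points $c_1,\ldots,c_m\in C_0(k)$ such that, for all $x_1,\ldots,x_m\in X(k)$, only finitely many morphisms $C_0\to X$ send each $c_i$ to $x_i$. Each $c_i$ induces, by evaluation on $\mathcal{O}(C_0)$, a ring homomorphism $q_i\colon A'\to k$ restricting to the inclusion on $A_0$, whose image $A_i:=q_i(A')$ is a $\ZZ$-finitely generated subring of $k$ containing $A_0$. The crucial observation is that $\phi_n(c_i)\in X(k)$ is the base change along $A_i\hookrightarrow k$ of the $A_i$-point of $\mathcal{X}_0$ obtained from $P_n$ via $A'\to A_i$; hence $\{\phi_n(c_i):n\geq 1\}$ lies in the image of the finite set $\mathcal{X}_0(A_i)$ and is therefore finite, for each $i$. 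Applying the pigeonhole principle to the tuples $(\phi_n(c_1),\ldots,\phi_n(c_m))$ yields an infinite subfamily of the pairwise distinct $\phi_n$ with common values at $c_1,\ldots,c_m$, contradicting mild boundedness; thus $\mathcal{X}_0(A')$ is finite for every such $A'$, i.e.\ $X_L$ is arithmetically hyperbolic over $L$. The hard part is the spreading-out step: one must arrange the data so that the specialized morphisms genuinely land in $X$ (not merely in a mixed-characteristic model) and so that each evaluation $\phi_n(c_i)$ is literally the reduction of a single $A_i$-point of $\mathcal{X}_0$, since this is exactly what allows arithmetic hyperbolicity of $X$ over $k$ to bound the values $\phi_n(c_i)$ and make the pigeonhole argument run.
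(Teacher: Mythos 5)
The paper gives no argument of its own here---it simply cites \cite[Theorem~4.4]{JAut}---and your proposal is essentially a correct reconstruction of that proof: reduce to finite transcendence degree, induct down to the function field of a curve over the base field, spread the integral points out to morphisms $C_0\to X$, note that the values at the mild-boundedness points $c_i$ land in the finite sets coming from $\mathcal{X}_0(A_i)$ by arithmetic hyperbolicity over $k$, and conclude by pigeonhole against mild boundedness. The only ingredient you leave implicit (and which is standard, cf.\ the model-independence of arithmetic hyperbolicity) is that the finiteness conclusion is obtained for the fixed model $\mathcal{X}_0$ over $A_0$, which is what allows the same witness to be carried through the induction and the reduction to finite transcendence degree.
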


Now, to prove Theorem \ref{thm2}, we   combine Theorem \ref{thm:mod_is_mb} and Theorem \ref{thm:mb}.

 \begin{proof}[Proof of Theorem \ref{thm2}] As in the statement of the theorem, let $k$ be an algebraically closed field of characteristic zero, and let $U$ be a variety over $k$ which admits a quasi-finite morphism $U\to \mathcal{M}\otimes_{\mathbb{Q}} {k}$. Let $k\subset L$ be an extension of algebraically closed fields of characteristic zero.  Note that, 
 by  Theorem \ref{thm:mod_is_mb},  for every algebraically closed field extension $K/k$, the  variety $U_K$ is mildly bounded over $K$. Therefore, if $U$ is arithmetically hyperbolic over $k$,  then it follows from Theorem \ref{thm:mb} that $U_L$ is arithmetically hyperbolic over $L$. This concludes the proof.
 \end{proof}

 \bibliography{refsci}{}
\bibliographystyle{alpha}

\end{document}